\documentclass[12pt,reqno]{amsart}

\usepackage{latexsym}
\usepackage{amscd, amsfonts, mathrsfs, amsmath, amssymb, amsthm, mathtools}
\usepackage{tikz-cd}
\usepackage[new]{old-arrows}

\usepackage[color,arrow,matrix,curve]{xy}

\usepackage[colorlinks=true,
linkcolor=black,
citecolor=black,
filecolor=black,
urlcolor=black,
bookmarks=true,
bookmarksopen=true,
bookmarksopenlevel=3,
plainpages=false,
pdfpagelabels=true]{hyperref}

\usepackage{cleveref}
\usepackage{wasysym}
\usepackage[OT2,T1]{fontenc}

\newtheorem{thm}{Theorem}[section] 
\crefname{thm}{theorem}{theorems}

\crefname{algo}{algorithm}{algorithms}

\crefname{claim}{claim}{claims}

\crefname{conj}{conjecture}{conjectures}
\newtheorem{cor}[thm]{Corollary}
\crefname{cor}{corollary}{corollaries}
\newtheorem{defn}[thm]{Definition}
\crefname{defn}{definition}{definitions}

\crefname{exer}{exercise}{exercises}
\newtheorem{Ex}[thm]{Example}
\crefname{Ex}{example}{examples}
\newtheorem{lemma}[thm]{Lemma}
\crefname{lemma}{lemma}{lemmas}
\newtheorem{prop}[thm]{Proposition}
\crefname{prop}{proposition}{propositions}

\crefname{prob}{problem}{problems}
\newtheorem{rem}[thm]{Remark}
\crefname{rem}{remark}{remarks}

\crefname{notation}{notation}{notation}

\crefname{ques}{question}{questions}
\newtheorem{conjecture}[thm]{Conjecture}

\def\ra{\rightarrow}

\def\Stab[#1]{\text{Stab}_G \left( #1 \right)}
\def\ZZ{\mathbb{Z}}

\def\ed[#1][#2]{\text{ed}_{#1} \left( #2 \right)}
\def\cd[#1]{\text{cd}_k \left( #1 \right)}
\def\cd[#1][#2]{\text{cd}_{#1} \left( #2\right)}
\def\edp[#1]{\text{ed}_k\left( #1 ; p \right)}

\def\O{\mathcal{O}}

\def\QQ{\mathbb{Q}}
\def\ZZ{\mathbb{Z}}

\def\HGm[#1]{H^2\left( #1_{ét}, \Gm \right)}

\def\Brone[#1][#2]{\text{Br}_1 \left( #1\right)_{#2}}
\def\Brone[#1]{\text{Br}_1 \left( #1\right)}

\def\tor[#1][#2]{\prescript{}{#1}{#2}}

\def\Gm{\mathbb{G}_m}

\def\xcoor[#1]{\mathbf{x}\left( #1 \right)}
\def\ycoor[#1]{\mathbf{y}\left( #1 \right)}
\renewcommand{\phi}{\varphi}

\DeclareMathOperator{\Cor}{cor}
\DeclareMathOperator{\res}{res}

\DeclareMathOperator{\Br}{Br}
\DeclareMathOperator{\Spec}{Spec}

\DeclareMathOperator{\cohd}{cd}
\DeclareMathOperator{\ind}{ind}
\DeclareMathOperator{\syl}{symb}

\DeclareMathOperator{\period}{per}

\DeclareMathOperator{\dd}{dd}

\DeclareSymbolFont{cyrletters}{OT2}{wncyr}{m}{n}
\DeclareMathSymbol{\Sha}{\mathalpha}{cyrletters}{"58}

\allowdisplaybreaks

\numberwithin{equation}{section}

\usepackage[]{geometry}

\def\Ker{{\rm Ker}}
\def\Supp{{\rm supp}}
\def\div{{\rm div}}
\def\X{\mathfrak{X}}

\def\Z{\mathbb{Z}}
\def\Q{\mathbb{Q}}
\def\O{\mathcal{O}}

\title{Period-index in top cohomology over semiglobal fields}

\author[Dijols]{Sarah Dijols} 
\address{(Dijols) University of British Columbia, Vancouver, BC V6T 1Z4, Canada}
\email{sarah.dijols@math.ubc.ca}

\author[Parimala]{Raman Parimala}
\address{(Parimala) Emory University, Atlanta, GA 30322, U.S.A.}
\email{parimala.raman@emory.edu}

\author[Ramdorai]{Sujatha Ramdorai}
\address{(Ramdorai) University of British Columbia, Vancouver, BC V6T 1Z4, Canada}
\email{sujatha@math.ubc.ca}

\author[Ure]{Charlotte Ure}
\address{(Ure) Illinois State University, Normal, Illinois 61790, U.S.A.}
\email{cure@ilstu.edu}

\keywords{Galois cohomology, symbol length, patching, semiglobal fields}

\subjclass[2020]{Primary 12G05; Secondary 11E04, 11E81, 13F25, 14F22, 14H25}

%12G05 Galois cohomology

%11E04 Quadratic forms over general fields
%11E81 Algebraic Theory of quadratic forms
%13F25 Formal power series rings
%14F22 Brauer groups of schemes
%14H25 Arithmetic ground fields for curves

\date{\today}

\begin{document}

\begin{abstract}
    We prove a common slot lemma for symbols in top cohomology classes over semiglobal fields. Furthermore, we prove that period and index agree for general top cohomology classes over such fields. We discuss applications to quadratic forms and related open problems. 
\end{abstract}

\maketitle

\section{Introduction}\label{sec:introduction} 

Let $F$ be a field and fix a separable closure $\overline{F}$ of $F$. The Galois group $\mathrm{Gal}\left( \overline{F}/F\right)$ will be denoted by $G_F$. Let $\mu_m$ be the group of $m$-th roots of unity in $\overline{F}$, considered as a module over $G_F$, and denote by $\mu_m^{\otimes i}$ its $i$-fold tensor product.  The Galois cohomology groups $H^i\left(G_F,\mu_m^{\otimes i}\right)$ will be denoted $H^i\left(F,\mu_m^{\otimes i}\right)$ throughout. For any field extension $E$ of $F$, the profinite group $G_E$ is a subgroup of $G_F$ and so there are natural restriction maps 
\begin{equation} \label{eq:restriction} \res: H^i \left( F, \mu_m^{\otimes i}\right) \ra H^i\left( E, \mu_m^{\otimes i}\right).\end{equation}
that take a cohomology class $\xi$ to its restriction $\xi_E$. If $\xi_E =0$, we call $E$ a splitting field of $\xi$. 

We call an element $\xi \in H^i\left( F,\mu_m^{\otimes i}\right)$ a \emph{symbol} if it is in the image of the $i$-fold cup product map 
$$H^1 \left( F, \mu_m\right) \times \cdots \times H^1\left( F, \mu_m\right) \rightarrow H^i \left( F, \mu_m^{\otimes i}\right). $$
If $\mu_m \subset F$, then $\mu_m^{\otimes i} \cong \mu_m$ as $G_F$-modules, so that there is a notion of symbols in $H^i \left( F, \mu_m\right)$. 
In the case that $i=m=2$, a symbol in $H^2\left( F, \mu_2^{\otimes 2}\right)$ determines a quaternion algebra over $F$ and vice versa. Tate's common slot lemma states that if $F$ is a number field, then any finite set of symbols in $H^2\left( F, \mu_m^{\otimes 2}\right)$ can be split by a single degree $m$ extension of $F$ \cite[Theorem 4.4 and the remarks following the theorem]{Tate1976}. In particular, any finite set of quaternion algebras over a number field may be split by a single quadratic extension of $F$. A semiglobal field $F$ is a  function field in one variable over a complete discretely valued field $K$. We prove an analogue of Tate's common slot lemma in the setting of semiglobal fields in \Cref{commonslot}. 

Understanding the splitting fields of general cohomology classes, as in Tate's common slot lemma, has been an interesting problem in the study of  Galois cohomology. This motivates the notion of the index of $\xi \in H^i(F,\mu_m)$ as the greatest common divisor of the degrees of field extensions $E$ of $F$ so that $\xi$ splits over $E$.
For an element $\xi$ in $H^i(F,\mu_m)$, we can also define the period of $\xi$ to be its order, which is denoted by $\period(\xi)$.  It is known that the period divides the index and that they have the same prime factors (\cite{Pie82}, Proposition 14.4(b)(ii)). The period-index question asks for the smallest number $r$ so that 
$$\ind(\xi)|\period(\xi)^{r}$$
for all $\xi \in H^i(F, \mu_m)$. There has been much progress toward answering this problem over various fields. In the case that $F$ is a local or global field,  period and index agree in the Brauer group ($i=2$) by the  Albert--Brauer--Hasse--Noether Theorem \cite{Brauer-Hasse-Noether1932}. 
Recent important results have been obtained in the case of degree 3 cohomology, where \cite{PS98} showed that period and index coincide in $H^3\left(F, \mu_m\right)$ if $F$ is the function field of a $p$-adic curve or the function field of a surface over a finite field. Furthermore, this coincidence has been settled for function fields of curves over imaginary number fields in \cite{Sur20}. In \cite{HHK22}, the authors obtain bounds on the index in terms of the period for $F$ a semiglobal field. 

Throughout this paper, $F$ is a semiglobal field, i.e. a function field in one variable over a complete discretely value field $K$ 
with residue field $k$. Let $m$ be a number coprime to the characteristic of $k$, and $n$ be the largest number such that $H^n(k, \mu_m)$ does not vanish. We investigate the cohomology groups $H^{n+2}(F, \mu_m)$. Since $H^i(F,\mu_m)$ vanishes for any $i>n+2$, we call $H^{n+2}(F, \mu_m)$ the \emph{top cohomology}. For classes $\xi \in H^{n+2}(F, \mu_m)$, Gosavi proved in \cite[Corollary 3.3]{Gosavi22} that $\ind(\xi) | \period(\xi)^2$ for $m$ an odd prime and $\ind(\xi) |\period(\xi)^3$ for $m =2$. In \Cref{thm:maintheorem}, we prove that period and index agree in top cohomology for semiglobal fields for any $m$ coprime to the characteristic of $k$. In fact, for a class $\xi \in H^{n+2}(F, \mu_m)$, we construct a field extension $L$ of $F$ of degree $m$ that is a splitting field of $\xi$.  \\  

This paper is organized as follows. In \Cref{sec:prelim}, we give necessary background on cohomological dimension, the period-index problem, semiglobal fields, and the local-global principle. Let $K$ be a complete discretely value field with residue field $k$ of characteristic coprime to $m$ and suppose that $k$ has cohomological dimension $n$. In \Cref{sec:complete}, we explore the top cohomology groups $H^{n+1}(K, \mu_m)$ of the complete discretely value field $K$. In particular, we prove \Cref{lemma:decompose_xi_over_K}, which provides a decomposition of elements in $H^{i}(K, \mu_m)$ that is crucial to our proofs. Using this decomposition, we show that period and index agree in the top cohomology $H^{n+1}(K, \mu_m)$ groups in \Cref{mun}. In \Cref{main}, we state and prove our two main theorems. First, we prove an analogue of Tate's common slot lemma over semiglobal fields in \Cref{commonslot}. We show that for any finite set $\mathcal{S}$ of symbols in $H^{n+2}\left( F, \mu_m^{\otimes n+2}\right)$, there is a field extension $L$ of $F$ of degree $m$ so that all $s \in \mathcal{S}$ are split over $L$. The proof relies on the local-global principle discussed in \Cref{HHK}. More precisely, we show that for any discrete valuation $\omega$ on $L$ with restriction $\nu$ to $F$, the restriction $s_{F_\nu}$ splits over $s_{L_\omega}$ for all $s \in \mathcal{S}$. Finally, we show in \Cref{thm:maintheorem} that period and index agree in top cohomology of $H^{n+2}\left(F, \mu_m\right)$. We conclude the paper by discussing some open questions on top cohomology and connections of period-index questions with the $u$-invariant of quadratic forms in \Cref{sec:open}. \\

\noindent\textbf{Acknowledgement.} 
First, we thank Suresh Venapally for bringing to our attention the questions on top cohomology and for continued support during the project. We also thank Danny Krashen for helpful comments and suggestions. 

This project was started during the Women in Numbers (WIN) workshop at the Banff International Research Station in 2023. We thank the workshop organizers Shabnam Akhtari, Alina Bucur, Jennifer Park, Renate Scheidler, and the staff at BIRS for their support. We also thank the following for supporting the workshop, childcare, and travel: NSF DMS-2012061, the Number Theory Foundation, the Clay Mathematics Institute, the Journal of Number Theory, the Pacific Institute for Mathematical Sciences, and the AMS-Simons Travel Grant program. Sujatha Ramdorai is supported by the NSERC Discovery Grant 2019-03987.

\section{Preliminaries}\label{sec:prelim} 

\subsection{Cohomological dimension} \label{subsec:cd}

For a profinite group $\Gamma$ and a $\Gamma$-module $A$, we denote by $H^i(\Gamma,A)$ the $i$-th profinite cohomology group. If $\Gamma_F$ is the absolute Galois group of a field $F$, we also call $A$ an $F$-module and write 
$H^i(F,A) = H^i(\Gamma_F,A)$ (see for instance \cite[Chapter I \S 2.2 and Chapter II]{Serre1973}). The cohomological dimension is an important invariant associated with a field (for more details, we refer the reader to \cite[Chapter I \S 3 and Chapter II \S 4]{Serre1973}).
\begin{defn}
For a prime $\ell$, the \emph{$\ell$-cohomological dimension of F, $\cohd_\ell(F)$,} is the smallest integer $R$ for which the $\ell$-primary component of $H^{i+R}(F,A)$ is zero for any integer $i \geq 1$ and for every torsion $F$-module $A$. If no such $R$ exists, we say that $\cohd_\ell(F) =\infty$.
Analogously, the \emph{cohomological dimension $\cohd(F)$ of $F$ }is the smallest integer $R$ so that $H^{i+R}(F,A) =0$ for any $i \geq 1$ and for any torsion $F$-module $A$. 
If no such $R$ exists, we say that $\cohd(F) = \infty$. 
\end{defn}

An algebraically closed field has cohomological dimension $0$. Finite fields and the Laurent series field $k((t))$ with $k$ algebraically closed of characteristic $0$ have cohomological dimension $1$. More generally, the cohomological dimension is invariant under finite field extensions (see \Cref{lemma:cd-finite-ext} below) and increases as the transcendental degree increases (see \Cref{transcendance} below).

\begin{lemma}[{\cite[Proposition 10, Chapter II, \S 4.1]{Serre1973}}]\label{lemma:cd-finite-ext}
    Let $L$ be an algebraic extension of the field $F$ and $\ell$ a prime number. Assume $[L:F] < \infty$ and $\cohd_\ell(F) < \infty$, then $\cohd_\ell(L) = \cohd_\ell(F)$. In particular, if  $[L:F] < \infty$ and $\cohd(F)<\infty$, then $\cohd(L) = \cohd(F)$. 
    \end{lemma}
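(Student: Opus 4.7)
The plan is to translate to group cohomology and reduce via Sylow pro-$\ell$ subgroups to a purely pro-$\ell$ statement. Set $G = G_F$ and $H = G_L$. Since cohomological dimension is insensitive to purely inseparable extensions (the Galois groups of a field and its perfect closure coincide), we may replace $L$ by its maximal separable subextension and assume $L/F$ is separable, so that $H$ is a closed subgroup of $G$ of index $[L:F] < \infty$.

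First I would prove $\cohd_\ell(L) \leq \cohd_\ell(F)$ via Shapiro's lemma. For any torsion $H$-module $B$, the induced $G$-module $A = \mathrm{Ind}_H^G B$ is again torsion, and Shapiro's lemma yields a canonical isomorphism $H^i(G,A) \cong H^i(H,B)$ for all $i$. Writing $R = \cohd_\ell(F)$, the $\ell$-primary component of $H^{i+R}(G,A)$ vanishes for all $i \geq 1$ and all $A$; hence so does that of $H^{i+R}(H,B)$, giving $\cohd_\ell(L) \leq R$.

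For the reverse inequality $\cohd_\ell(F) \leq \cohd_\ell(L)$, I would pass to Sylow pro-$\ell$ subgroups. Let $P$ be a Sylow pro-$\ell$ subgroup of $G$. The standard corestriction-restriction argument, combined with the fact that $\mathrm{cor}\circ\mathrm{res}$ is multiplication by the index (which is coprime to $\ell$), shows that restriction induces an injection on $\ell$-primary cohomology, so $\cohd_\ell(G) = \cohd(P)$; after replacing $P$ by a suitable conjugate, $Q := P \cap H$ is a Sylow pro-$\ell$ subgroup of $H$, and likewise $\cohd_\ell(H) = \cohd(Q)$. Since $[P:Q] \leq [G:H]$ is finite, the lemma is reduced to the assertion that for any pro-$\ell$ group $P$ with open subgroup $Q$ of finite cohomological dimension, $\cohd(P) = \cohd(Q)$.

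The main obstacle is this pro-$\ell$ assertion, since the naive Lyndon--Hochschild--Serre argument fails (the quotient $P/Q$ has infinite cohomological dimension). I would induct on the $\ell$-power index $[P:Q]$: replacing $Q$ by its normal core and interpolating by subgroups of index $\ell$, one reduces to the case $Q \triangleleft P$ with $P/Q \cong \mathbb{Z}/\ell$. In this case, consider the short exact sequence of $P$-modules
\[ 0 \to \mathbb{F}_\ell \to \mathbb{F}_\ell[P/Q] \to I \to 0, \]
where $I$ is the augmentation ideal. Shapiro's lemma identifies $H^i(P,\mathbb{F}_\ell[P/Q]) \cong H^i(Q,\mathbb{F}_\ell)$, and an inductive dévissage on the powers of $I$ (using that $I$ has an $\mathbb{F}_\ell$-filtration with trivial one-dimensional quotients, and that every torsion module over a pro-$\ell$ group admits a composition series with factors $\mathbb{F}_\ell$) propagates the vanishing of $H^n(Q,\mathbb{F}_\ell)$ for $n>\cohd(Q)$ into the vanishing of $H^n(P,\cdot)$ on all torsion modules in the same range, yielding $\cohd(P) \leq \cohd(Q)$ and completing the proof.
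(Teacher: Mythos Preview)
The paper does not supply its own proof of this lemma; it simply cites Serre. So there is nothing to compare against, and the question is just whether your argument is complete.

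Your reduction is exactly the standard one: Shapiro for $\cohd_\ell(L)\le\cohd_\ell(F)$, passage to a Sylow pro-$\ell$ subgroup $P$ of $G_F$ to turn the reverse inequality into a statement about pro-$\ell$ groups, and then reduction by a subnormal chain to the case $[P:Q]=\ell$. All of that is fine.

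The gap is in the last paragraph. Two points:

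\emph{(i) You never use the hypothesis $\cohd_\ell(F)<\infty$ in the reverse inequality.} This hypothesis is not decorative: it is what makes the argument go. The clean proof runs by \emph{descending} induction on the degree, starting from $n:=\cohd(P)<\infty$. Once $H^{j'}(P,\mathbb{F}_\ell)=0$ is known for all $j'>j$ (with $j>\cohd(Q)$), d\'evissage gives $H^{j+1}(P,I)=0$, and then the \emph{augmentation} sequence
\[
0\longrightarrow I\longrightarrow \mathbb{F}_\ell[P/Q]\longrightarrow \mathbb{F}_\ell\longrightarrow 0
\]
together with Shapiro ($H^j(P,\mathbb{F}_\ell[P/Q])=H^j(Q,\mathbb{F}_\ell)=0$) forces $H^j(P,\mathbb{F}_\ell)=0$. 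Without the finite starting point $n$, there is no base for this descent.

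\emph{(ii) The sequence you wrote down points the wrong way.} Your norm sequence $0\to\mathbb{F}_\ell\to\mathbb{F}_\ell[P/Q]\to I\to 0$ gives, in the long exact sequence, that $H^{j}(P,\mathbb{F}_\ell)$ is a quotient of $H^{j-1}(P,I)$; the d\'evissage on the filtration of $I$ then only expresses this in terms of $H^{j-1}(P,\mathbb{F}_\ell)$, about which you know nothing. With the augmentation sequence the connecting map goes from $H^j(P,\mathbb{F}_\ell)$ to $H^{j+1}(P,I)$, which is exactly what the descending induction controls. Swap the sequence and make the use of $\cohd(P)<\infty$ explicit, and the proof is complete.
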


\begin{lemma}[{\cite[Proposition 11, Chapter II, \S 4.2]{Serre1973}}]\label{transcendance}
    Let $L$ be an extension of $K$, then 
    $$\cohd_\ell(L) \leq \cohd_\ell(K) + \mathrm{trdeg}_K (L)$$
    for any prime $\ell$. There is equality if $L$ is finitely generated over $K$, $\cohd_\ell(K) <\infty$, and $\ell$ is coprime to the characteristic of $K$.  
\end{lemma}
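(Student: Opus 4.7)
Since this statement is essentially Serre's Proposition~11 in Chapter~II, \S4.2, the natural plan is to reprove it along Serre's lines: handle the inequality and the equality separately, and in each case reduce first to the one-variable case and then induct on transcendence degree.

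For the inequality, I would write $L$ as an algebraic extension of the purely transcendental field $K(x_1,\ldots,x_d)$ with $d = \mathrm{trdeg}_K L$. The algebraic step does not raise cohomological dimension, since \Cref{lemma:cd-finite-ext} extends to arbitrary algebraic extensions (cohomological dimension is detected by finite subextensions). For a single transcendental adjunction $K \subset K(t)$, I would run the Hochschild--Serre spectral sequence for the exact sequence $1 \to G_{K^s(t)} \to G_{K(t)} \to G_K \to 1$; the crucial input is $\cohd_\ell G_{K^s(t)} \leq 1$, which for $\ell \neq \mathrm{char}\,K$ follows from Tsen's theorem applied to all finite extensions of $K^s(t)$. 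Iterating $d$ times then yields $\cohd_\ell L \leq \cohd_\ell K + d$.

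For the equality under the additional hypotheses, set $n = \cohd_\ell(K)$; the goal is to produce an explicit nonzero class in $H^{n+d}(L,\mu_\ell^{\otimes n+d})$. I would induct on $d$. In the base case $d = 1$, pick a discrete valuation $v$ on $L$ over $K$ whose residue field $\kappa(v)$ is a finite extension of $K$; by \Cref{lemma:cd-finite-ext} this residue field still has $\cohd_\ell = n$, so we may fix a nonzero class $\beta \in H^n(\kappa(v),\mu_\ell^{\otimes n})$. The tame residue map $\partial_v : H^{n+1}(L,\mu_\ell^{\otimes n+1}) \to H^n(\kappa(v),\mu_\ell^{\otimes n})$ admits a section given by cup product with a uniformizer $\pi$, so $\beta \cup \{\pi\}$ is a nonzero lift. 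For $d > 1$, I would work in the tower $K \subset K(x_1) \subset \cdots \subset K(x_1,\ldots,x_d)$ and consider the iterated symbol $\alpha \cup \{x_1\} \cup \cdots \cup \{x_d\}$, which is nonzero because its iterated residue at $v_{x_1},\ldots,v_{x_d}$ recovers $\alpha$.

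The main obstacle is the base case of the equality: one must confirm that the tame residue sequence is exact with the expected splitting, and that the lifted classes genuinely survive in $H^{n+1}(L,\mu_\ell^{\otimes n+1})$. This is precisely where the hypothesis $\ell \neq \mathrm{char}\,K$ is essential, since it guarantees that $\mu_\ell$ is \'etale over the valuation ring and that the residue map is well behaved. Once this step is established, the algebraic reduction and the inductive construction are formal.
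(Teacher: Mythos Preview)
The paper does not supply a proof of this lemma; it is recorded purely as a citation of Serre's Proposition~11, Chapter~II, \S4.2. Your proposal is a correct reconstruction of Serre's own argument, so there is nothing to compare against here. One minor point worth tightening: in the equality step for $d>1$ your iterated-symbol construction lives in the purely transcendental field $K(x_1,\ldots,x_d)$, so you should make explicit the passage to a general finitely generated $L$ by writing $L$ as a finite extension of $K(x_1,\ldots,x_d)$ and invoking \Cref{lemma:cd-finite-ext} (which requires $\cohd_\ell(K)<\infty$, exactly one of your hypotheses).
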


\begin{lemma}\label{cdK=cdk+1} 
    Let $K$ be a complete discretely value field with residue field $k$. Suppose that the characteristic of $k$ is different from $\ell$. If $\cohd_\ell(k) \leq n-1$, then $\cohd_\ell(K)\leq n$. 
\end{lemma}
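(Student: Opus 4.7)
The plan is to analyze $G_K$ through its inertia filtration. Since $K$ is complete discretely valued, the maximal unramified extension $K^{nr}/K$ has Galois group $\Gal(K^{nr}/K) \cong G_k$, and we obtain a short exact sequence of profinite groups
$$1 \to I \to G_K \to G_k \to 1,$$
where $I = \Gal(\overline{K}/K^{nr})$ is the inertia subgroup. The strategy is to bound $\cohd_\ell(I)$ and then combine with the hypothesis $\cohd_\ell(G_k) \leq n-1$ using the standard spectral-sequence inequality $\cohd_\ell(G_K) \leq \cohd_\ell(I) + \cohd_\ell(G_k)$ for a closed normal subgroup (as in \cite[Chapter II \S 4]{Serre1973}).

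The key step is to show that $\cohd_\ell(I) \leq 1$ under the assumption that $\ell$ is coprime to $\mathrm{char}(k)$. Let $p = \mathrm{char}(k)$ (with the convention $p = 1$ if $\mathrm{char}(k) = 0$), and let $P \subset I$ be the wild inertia. Then $P$ is a pro-$p$ group, so since $\ell \neq p$, its $\ell$-cohomological dimension vanishes: $\cohd_\ell(P) = 0$. The tame quotient fits into the canonical isomorphism $I/P \cong \prod_{q \neq p} \Z_q(1)$, whose pro-$\ell$ Sylow subgroup is the pro-cyclic group $\Z_\ell(1)$. Since any non-trivial pro-$\ell$ procyclic group has $\ell$-cohomological dimension exactly $1$, we get $\cohd_\ell(I/P) \leq 1$. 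Applying the same extension inequality to $1 \to P \to I \to I/P \to 1$ then gives $\cohd_\ell(I) \leq \cohd_\ell(P) + \cohd_\ell(I/P) \leq 0 + 1 = 1$.

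Putting these together yields
$$\cohd_\ell(G_K) \;\leq\; \cohd_\ell(I) + \cohd_\ell(G_k) \;\leq\; 1 + (n-1) \;=\; n,$$
which is exactly the desired bound $\cohd_\ell(K) \leq n$. The only delicate point is the computation of $\cohd_\ell(I)$, which is where the hypothesis $\ell \neq \mathrm{char}(k)$ enters essentially; without it the wild inertia $P$ could be a pro-$\ell$ group of infinite cohomological dimension, and the argument would collapse. Everything else is a routine application of the Hochschild--Serre estimate for cohomological dimension of group extensions, already invoked in \Cref{transcendance}-style arguments earlier in this section.
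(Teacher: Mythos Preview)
Your proof is correct. It is in fact exactly the argument behind \cite[Chapter II, \S 4.3, Proposition~12]{Serre1973}, which is the reference the paper invokes: one uses the extension $1\to I\to G_K\to G_k\to 1$, shows $\cohd_\ell(I)\le 1$ via the wild/tame filtration (this is where $\ell\neq\mathrm{char}(k)$ enters), and concludes by the Hochschild--Serre bound.

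The paper's own one-line proof packages the same information differently, phrasing it through the residue exact sequence
\[
0\longrightarrow H^m(k,\mu_\ell)\longrightarrow H^m(K,\mu_\ell)\xrightarrow{\ \delta\ } H^{m-1}(k,\Z/\ell\Z)\longrightarrow 0
\]
rather than through the group-theoretic filtration. These are two faces of the same structure: the residue map $\delta$ is precisely the edge map of the Hochschild--Serre spectral sequence for $1\to I\to G_K\to G_k\to 1$, and the splitting into unramified part and residue reflects the fact that $\cohd_\ell(I)\le 1$. Your version has the advantage of being self-contained and of making transparent why the hypothesis on $\mathrm{char}(k)$ is needed; the paper's version is closer to how the result is actually used later (e.g.\ in \Cref{lemma:decompose_xi_over_K}), where the residue decomposition is the operative tool.
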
 

\begin{proof} Since the char$(k) \neq \ell$ and $K$ is complete, 
 there is a residue map from $H^n(K, \mu_\ell)$ to 
$H^{n-1}(k, \Z/\ell\Z)$ with kernel isomorphic to $H^n(k, \mu_\ell)$   
(cf. \cite[Proposition 12, Chapter II, \S 4.3]{Serre1973}). This proves the statement. 
\end{proof}

Let $F=K(C)$ be the function field of an algebraic curve over a complete discretely valued field $K$ with residue field $k$. Then $F$ is a semiglobal field in the sense discussed below in \Cref{semiglobal}. If $\ell$ is a prime number which is coprime to the characteristic of the residue field $k$, then it is also coprime to the characteristic of $K$ and $F$. In particular, we may apply \Cref{cdK=cdk+1} to $F$ and $K$ in this setting to deduce that if $\cohd_\ell(k) = n$, then $\cohd_\ell(K) = n+1$ and $\cohd_\ell(F) = n+2$. Furthermore, if $\cohd(k) = n$, then $\cohd_\ell(K) \leq n+1$ and $\cohd_\ell(F) \leq n+2$ for any prime $\ell$ coprime to the residue characteristic.\\

For a field $F$ with $\cohd(F) =n$ and $A$ a torsion $G$-module, the \emph{top cohomology with coefficients in $A$} is the cohomology group $H^n\left( F,A\right)$. Throughout this paper, $A$ will often be the Galois module arising from roots of unity.

\subsection{Period-index problem and Voevodsky's Theorem} \label{Voevodsky}

Let $F$ be a field and let $m$ be any number coprime to the characteristic of $F$. Fix a separable closure $\overline{F}$  of $F$ and denote by $\Gamma_F$ its absolute Galois group. Consider the Kummer exact sequence of $\Gamma_F$ -modules 
$$\xymatrix{ 1 \ar[r] & \mu_m \ar[r] & \overline{F}^\times \ar[r]^{a \mapsto a^m} & \overline{F}^\times \ar[r] & 1}. $$
The induced long exact sequence of cohomology in conjunction with Hilbert 90 gives an isomorphism \begin{equation} \label{eq:Kummer} H^1\left( F, \mu_m\right) \xrightarrow[]{\sim} F^\times/(F^\times)^m. \end{equation}
There is an $i$-fold cup-product from the product of $i$ copies of $H^1(F,\mu_m)$ to $H^i(F,\mu_m)$ for any $i>1$
\begin{equation} \label{eq:cup-product} 
    \cup: H^1\left( F, \mu_m \right) \times \cdots \times H^1\left( F, \mu_m\right) \longrightarrow H^i \left( F, \mu_m \right). 
\end{equation}
For elements $a_1, \ldots, a_i \in F^\times/(F^\times)^m,$ we denote by $(a_1, \ldots, a_i) \in H^i\left(F, \mu_m\right)$ the image under the composition of the isomorphism in (\ref{eq:Kummer}) and the cup product in (\ref{eq:cup-product}). Such an element in $H^i\left(F, \mu_m\right)$ is called a \emph{symbol}. Abusing notation, we will consider $a_1, \ldots, a_i \in F^\times$ throughout. The cup-product map in (\ref{eq:cup-product}) is alternating and we have the following relations for any $a,b \in F^\times$ and $n \in \mathbb{Z}$
\begin{equation}\label{eq:relations-for-symbols}
    \begin{aligned}
        -(a,b) &= (b,a) \\
        n(a,b) &= (a,b^n) = (a^n,b) \\
        (a,a) &= (-1,a) 
    \end{aligned}
\end{equation}
(see e.g. \cite[Chapter 3.4 and Chapter 4.6]{Gille-Szamuely2017}). 

Let $K_i^M(F)$ denote the $i$-th Milnor $K$-group of $F$ (see H. Bass’s Algebraic K-theory). 
Let $\ell$ be a prime not equal to char$(F)$. Suppose that $F$ contains the $\ell$-th roots of unity $\mu_\ell$. 
By a celebrated result of Voevodsky \cite{Voevodsky2003}, the norm-residue map determines an isomorphism between Milnor's $K$-groups and cohomology
$$K_i^M(F) \otimes \ZZ/\ell\ZZ  \xrightarrow[]{\sim} H^i\left( F, \mu_\ell\right).$$
In particular, any element $\xi$ in $H^i\left( F, \mu_\ell\right)$ may be written
 as a sum of symbols $$\xi = \sum_{j=1}^r \left( a_{j,1} , \ldots, a_{j,i} \right), $$
with $ a_{j,1} , \ldots, a_{j,i} \in F^\times$ for $1 \leq j \leq r$.
Related to this presentation, the {\it symbol length} of the $i$-th cohomology group 
$H^i\left(F, \mu_\ell\right)$, denoted $\syl_i^\ell(F)$, is defined as the 
minimal number $r$ such that any element
$\xi\in H^i(F,\mu_\ell)$ is expressible as a sum of at most $r$ symbols.  
Investigating the symbol lengths $\syl_i^\ell(F)$ for fields $F$ is of interest.
 For more background and open problems related to the symbol lengths of fields, 
 we refer the interested reader to  \cite{Auel-Brussel-Garibaldi-Vishne2011}. 
 We note that by definition, the symbol length $\syl_1^\ell(F) = 1 $   
 for any field $F$. For $n=2$, the second cohomology group 
 $H^2(F,\mu_\ell)$ is isomorphic to the $\ell$-torsion of the Brauer group
 $\Br(F)$. For a $p$-adic local field $F$, the Brauer group $\Br(F) \cong \QQ/\ZZ$ \cite[Chapter XII]{Serre1997} and if $\mu_\ell \subset F$, then $\syl_2^\ell(F) = 1$. Using this together with the 
 Albert--Brauer--Hasse--Noether Theorem \cite{Brauer-Hasse-Noether1932}, 
 we deduce that $\syl_2^\ell(F) = 1$ for any totally imaginary global field $F$. 
 In either of the two previous cases $\cohd(F) = 2$ and so in summary 
$$\syl_n^\ell (F) = \begin{cases} 1 & n =1,2 \\ 0 & n >2\end{cases}
$$ for a local or a totally imaginary global field $F$ containing $\mu_\ell$. 
Now, let $F$ be the function field of a $p$-adic curve, and let $\ell$ be a
 prime which is coprime to $p$, then the symbol length $\syl_2^\ell(F)$ is $2$ 
 \cite{Saltman1997,Saltman1998} and $\syl_3^\ell(F) = 1$ \cite{Parimala-Suresh2010}. 

\begin{rem}\label{rmk:splitting-field} We recall that a cohomology class $\xi \in H^i(F, A)$ for a 
field $F$ and a Galois module $A$ has splitting field $L$ if the restriction $\xi_L$ as defined in 
(\ref{eq:restriction}) is trivial. Any symbol $(a,b)$ in $H^n (F, \mu_m)$ is split by the degree $m$ 
extension $L= F(\sqrt[m]{a})$ since 
$$0 = m \left( \sqrt[m]{a}, b\right)_{L} \overset{(\ref{eq:relations-for-symbols})}= \left(\left( \sqrt[m]{a}\right)^m, b\right)_L = \left( a,b\right)_L,$$
where the first equality holds as $H^n(F, \mu_m)$ is $m$-torsion. 
More generally, a symbol $(a_1, \ldots, a_i) \in H^i(F, \mu_m)$ splits over the degree $m$ extension $F(\sqrt[m]{a_j})$ for any $j$. 
\end{rem}The index of a cohomology class gives a measure for splitting. 

\begin{defn}
    Let $F$ be a field and $\xi \in H^i\left(F,A\right)$. The \emph{index of $\xi$} is 
    $$\ind(\xi) = \gcd\left\{ [L:F] : \xi_L \text{ splits} \right\}.$$
\end{defn}

The period-index question discussed in the introduction explores the relationship between symbol lengths and the degrees of splitting field extensions for symbols.
In \Cref{quad} we will explain the connection between symbol lengths and the $u$-invariant of a field.

\subsection{Semiglobal fields and a local-global principle} \label{semiglobal}

One of the main tools we build on in this paper is patching in semiglobal fields. We follow \cite{HHK} in the following definitions. 

\begin{defn}[semiglobal fields]
  Let $K$ be a complete discretely valued field. A \emph{semiglobal field} over $K$ is a one-variable function field $F$ over $K$.
\end{defn}

\begin{Ex}
Examples of semiglobal fields are $F = \Q_p(x)$, $F = k((t))(x)$ for any field $k$, and any finite extension of these.
\end{Ex} 

Let $F$ be a semiglobal field, that is $F$ is the function field of a smooth
projective  curve $C$ over $K$.
Let   $\mathcal{O}_K$ be the corresponding valuation ring, $\pi \in \mathcal{O}_K$ a (fixed) 
uniformizing parameter and $k$ the residue field.
A \emph{regular proper model of $C$} is a proper $\mathcal{O}_K$-scheme $\mathfrak{X}$ with function field $F$ that is regular as a scheme. We denote its reduced closed fiber by $\mathfrak{X}_k$. By \cite[Main Theorem]{Lipman1978} such a regular proper model exists and by \cite[page 193]{Lipman1975}, we may assume that the reduced closed fiber $\mathfrak{X}_k$ is a union of regular curves with normal crossings. 
We call such a model a \emph{normal crossings model} of $F$. 
$$\xymatrix{
C \ar[d] & \mathfrak{X} \ar[d] \ar[l] \ar[r] & \mathfrak{X}_k \ar[d] \\
\Spec(K) & \Spec(\mathcal{O}_K) \ar[l] \ar[r] & \Spec(k) 
}$$
For each prime divisor $D$, we can look at the discretely valued ring $\O_{D,\mathfrak{X}}$ and get a corresponding discrete valuation $v_D$ on $F$. Such valuations are known as divisorial.

\begin{defn}
The \emph{divisorial set of places of $F$ corresponding to the model $\mathfrak{X}$} is the set 
$$\Omega_{F,\mathfrak{X}} = \left\{v_{D} \mid D \text{ prime divisor on } \mathfrak{X}\right\}.$$
We call $v_D$ with $D$ as above a divisorial valuation on $F$. The set of all divisorial valuations $v_D$, on all regular projective models $\X$ of $F$, will be denoted by $\Omega_F$. 
\end{defn}

A new approach to local-global principles for homogeneous varieties over function fields of curves defined over complete discretely valued fields was introduced by Harbater, Hartmann, and Krashen in \cite{HHK} via patching. Further, Parimala and Suresh, building on the results of Hartmann, Harbater, and Krashen, proved the following Theorem that will fundamentally be used later in our paper:
\begin{thm}[Theorem 4.3, \cite{HKP2021}] \label{HHK} Let $F$ be a semiglobal field, $\ell$ be a prime number, $\Omega_F$ the set of divisorial places of $F$, and let $r$ be any integer. Suppose that $\hbox{char}(k) \neq \ell$, and let $n+1 \geq 2$. Then
$$ \Sha_{\Omega_F}^{n+1} := \Ker\left(H^{n+1}\left(F, \mu_\ell^{\otimes_r}\right) \rightarrow \prod_{\omega \in \Omega_F}H^{n+1}\left(F_{\omega}, \mu_\ell^{\otimes_r}\right) \right)=0.$$
\end{thm}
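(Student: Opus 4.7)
The plan is to apply the Harbater--Hartmann--Krashen patching machinery to reduce vanishing of a global class to vanishing on a finite collection of local ``patching fields,'' and then to verify that vanishing on all divisorial completions $F_\omega$ propagates to those patching fields. First I would reduce to the case $\mu_\ell \subset F$ by passing to the cyclotomic extension $F(\mu_\ell)/F$, whose degree divides $\ell-1$ and is in particular prime to $\ell$; a standard restriction--corestriction argument then shows that the Shafarevich kernel of $F$ injects into the analogous one for $F(\mu_\ell)$. Once $\mu_\ell \subset F$, we have $\mu_\ell^{\otimes r} \cong \mu_\ell$ as $G_F$-modules, so the twist $r$ disappears and we may treat a single class $\xi \in H^{n+1}(F, \mu_\ell)$ that restricts to zero on every $F_\omega$.

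Fix such a $\xi$ and choose a regular proper normal-crossings model $\mathfrak{X}$ of $F$ together with a finite set $\mathcal{P} \subset \mathfrak{X}_k$ of closed points containing all crossings of $\mathfrak{X}_k$ and any additional points where $\xi$ ramifies badly. For each irreducible component $C$ of $\mathfrak{X}_k$ set $U_C := C \setminus \mathcal{P}$ and let $F_{U_C}$ be the HHK patching field associated to $U_C$; for each $P \in \mathcal{P}$ let $F_P$ denote the fraction field of $\widehat{\mathcal{O}}_{\mathfrak{X},P}$; and for each branch $\wp$ of $\mathfrak{X}_k$ at a point $P \in \mathcal{P}$ let $F_\wp$ be the corresponding branch field. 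The patching theorem then reduces vanishing of $\xi$ in $H^{n+1}(F, \mu_\ell)$ to vanishing of every restriction $\xi_{F_{U_C}}$ and every $\xi_{F_P}$. For a component field $F_{U_C}$, the divisorial valuation $\omega = v_C \in \Omega_F$ provides an inclusion $F_{U_C} \hookrightarrow F_\omega$ and one checks that the map $H^{n+1}(F_{U_C}, \mu_\ell) \to H^{n+1}(F_\omega, \mu_\ell)$ is injective via a residue comparison over the residue fields of the two completions (both dominate the local ring of the generic point of $C$). Hence $\xi_{F_\omega} = 0$ forces $\xi_{F_{U_C}} = 0$. For a point field $F_P$, a further round of patching inside $\widehat{\mathcal{O}}_{\mathfrak{X},P}$ reduces to vanishing on all branch fields $F_\wp$ at $P$; each $F_\wp$ is a complete discretely valued field whose residue field is itself complete discretely valued, and two iterations of the residue sequence for complete DVRs express $H^{n+1}(F_\wp, \mu_\ell)$ in terms of $H^{n-1}$ and $H^n$ of the iterated residue field, which arise as residues along divisorial valuations of $F$. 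Since $\xi$ already vanishes at each such $F_\omega$, it follows that $\xi_{F_\wp}$ and hence $\xi_{F_P}$ vanishes.

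The main obstacle I expect is the bookkeeping at the closed points $P$: one must carefully match each branch $\wp$ at $P$ with divisorial valuations of $F$ along which the residue of $\xi$ is known to be trivial, and confirm that the two-step residue maps out of $H^{n+1}(F_\wp, \mu_\ell)$ are genuinely injective, leaving no kernel to absorb $\xi_{F_\wp}$. This is where the hypothesis $\mathrm{char}(k) \neq \ell$ and the restriction $n+1 \geq 2$ become essential, since they guarantee a clean Bloch--Kato--style residue structure on every complete discretely valued field appearing in the patching setup and allow the cohomological-dimension bounds of \Cref{cdK=cdk+1} to propagate uniformly through the iterated residues.
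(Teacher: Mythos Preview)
The paper does not prove this statement; it is quoted verbatim as \cite[Theorem 4.3]{HKP2021} and used as a black box throughout \Cref{main}. There is therefore no ``paper's own proof'' to compare your proposal against.

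That said, your sketch is a reasonable outline of the strategy one finds in the cited literature: reduce the twist via a prime-to-$\ell$ cyclotomic extension, choose a normal-crossings model, apply the HHK six-term patching sequence to reduce to the fields $F_{U_C}$ and $F_P$, and then feed divisorial vanishing into those patches. Two points in your write-up would need real work before this becomes a proof. First, the injectivity of $H^{n+1}(F_{U_C},\mu_\ell)\to H^{n+1}(F_{\omega},\mu_\ell)$ is not a formality; $F_{U_C}$ and $F_\omega$ have the same residue field $k(C)$, but $F_{U_C}$ is not complete, so you must actually exhibit a section (e.g.\ via the $t$-adic completion $\widehat{R_C}$ and the identification $F_\omega\cong k(C)((t))$) rather than simply asserting a residue comparison. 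Second, your treatment of the closed-point fields $F_P$ invokes ``a further round of patching inside $\widehat{\mathcal{O}}_{\mathfrak{X},P}$,'' but patching is a global phenomenon on the model, not something one performs inside a single $2$-dimensional complete local ring; what is actually needed at $P$ is an explicit two-step residue/specialization argument (as in \Cref{twodim-decomp}) linking $H^{n+1}(F_P,\mu_\ell)$ to cohomology of $k(P)$ and of the branch fields, together with the vanishing coming from the divisorial completions along the components through $P$. These are exactly the ``bookkeeping'' issues you flag in your last paragraph, and they are the substance of the proof rather than a detail to be filled in later.
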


\section{Complete discretely value field of cohomological dimension $n+1$}
\label{sec:complete}

Let $K$ be a complete discretely value field, $\O_K$ its ring of integers, $\pi$ its local uniformizer, and $k$ its residue field.  Throughout this section, we suppose that $\ell$ is a prime which is coprime to the residue characteristic and $\cohd_\ell(k) = n$. By \Cref{cdK=cdk+1}, we see that $\cohd_\ell(K)=n+1$. As a result, the expression \emph{top cohomology} of $K$ will refer to $H^{n+1}(K, \mu_\ell).$ The aim of this section is to prove that period and index agree in top cohomology in this setting. 

We first assume that $\mu_{\ell} \subset k$. We recall that that there is an exact sequence on Galois cohomology 
\begin{equation}\label{eq:ramification}
    \xymatrix{
    0 \ar[r] & H^{n+1}_{et} \left( \O_K, \mu_\ell\right) \ar[r]^r & H^{n+1} \left( K, \mu_{\ell}\right) \ar[r]^\delta & H^n \left( k, \mu_{\ell}\right), 
    }
\end{equation}
where $r$ is induced by restriction and $\delta$ is a residue map (see e.g. \cite[Chapter 6.8]{Gille-Szamuely2017}). An element $\xi$ in $H^{n+1}\left(K, \mu_\ell\right)$ is called \emph{unramified} if it lies in the kernel of $\delta$, or equivalently in the image of $r$.  Otherwise, we say that $\xi$ is \emph{ramified}. 

We show that any cohomology class splits into a \textit{sum of ramified and unramified parts}. We recall from \Cref{Voevodsky} that there is  a cup-product map from $\otimes^m H^1(K, \mu_{\ell})$ to $H^m(K, \mu_{\ell})$ and that $H^1(K, \mu_{\ell}) \cong K^{\times}\slash (K^{\times})^{\ell}$. 

\begin{lemma}\label{lemma:decompose_xi_over_K}
    Let $K$ be a discrete valued field, $\O_K$ its ring of integers, and $k$ its residue field. Suppose that $\mu_\ell \subset k$ and denote by $\pi$ the local uniformizer in $\O_K$. Any element $\xi \in H^m\left( K, \mu_\ell\right)$ decomposes as 
    $$\xi = \xi_1 + (\xi_2, \pi)$$
    with $\xi_1 \in H^m(K, \mu_\ell)$ and $\xi_2 \in H^{m-1}(K, \mu_\ell)$ unramified for any $m>0$. In the above equation $(\xi_2, \pi)$ denotes the cup product of $\xi_2$ with the symbol $(\pi) \in H^1\left( K, \mu_\ell\right)$. 
\end{lemma}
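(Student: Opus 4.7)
The plan is to leverage the residue exact sequence (\ref{eq:ramification}) in degree $m$ together with the standard fact that, once a uniformizer $\pi$ is fixed, this sequence is split by cup product with the class $(\pi)\in H^1(K,\mu_\ell)$. Since $\mu_\ell\subset k$ and $\ell\neq \mathrm{char}(k)$, Hensel's lemma lifts $\mu_\ell$ into $K$, so the Galois modules $\mu_\ell^{\otimes j}$ are canonically isomorphic to $\mu_\ell$ and I can freely identify twists.

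First I compute the residue $\eta := \delta(\xi)\in H^{m-1}(k,\mu_\ell)$. Because $\O_K$ is henselian with residue field $k$, the proper base change/henselian invariance for \'etale cohomology yields an isomorphism $H^{m-1}_{\mathrm{et}}(\O_K,\mu_\ell)\xrightarrow{\sim} H^{m-1}(k,\mu_\ell)$. Composing its inverse with the map $r$ in (\ref{eq:ramification}) (for degree $m-1$) produces an unramified class $\xi_2\in H^{m-1}(K,\mu_\ell)$ whose specialization to $k$ equals $\eta$. I then form the cup product $(\xi_2,\pi)\in H^m(K,\mu_\ell)$.

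Next I apply the Leibniz-type formula for the residue on cup products with a uniformizer: for any unramified class $\alpha\in H^{m-1}(K,\mu_\ell)$ with specialization $\bar\alpha\in H^{m-1}(k,\mu_\ell)$, one has $\delta(\alpha\cup(\pi))=\bar\alpha$ (see \cite[Chapter 6.8]{Gille-Szamuely2017}). Applied to $\xi_2$, this gives $\delta((\xi_2,\pi))=\eta=\delta(\xi)$. Setting $\xi_1 := \xi-(\xi_2,\pi)$, I obtain $\delta(\xi_1)=0$, so by exactness of (\ref{eq:ramification}) the class $\xi_1$ is unramified. This yields the desired decomposition $\xi=\xi_1+(\xi_2,\pi)$ with $\xi_1$ and $\xi_2$ both unramified.

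The only nontrivial input is the Leibniz formula $\delta(\alpha\cup(\pi))=\bar\alpha$, which I plan to invoke from the standard references rather than reprove; this is really what encodes the splitting of (\ref{eq:ramification}). A minor bookkeeping point is the sign appearing in some conventions for $\delta$, but since unramified classes are closed under negation one simply replaces $\xi_2$ by $-\xi_2$ if needed, so the form of the conclusion is unaffected.
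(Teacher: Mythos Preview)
Your argument is correct and is the standard, conceptual proof of this decomposition: fix a uniformizer, use the residue map, lift the residue to an unramified class via the henselian isomorphism $H^{m-1}_{\mathrm{et}}(\O_K,\mu_\ell)\cong H^{m-1}(k,\mu_\ell)$, and subtract off the cup product with $(\pi)$. This is genuinely different from the paper's proof, which instead invokes Voevodsky's theorem to write $\xi$ as a sum of symbols $(a_1,\ldots,a_m)$, expands each $a_i=u_i\pi^{j_i}$, and then repeatedly applies bilinearity and the relation $(\pi,\pi)=(-1,\pi)$ to separate out an unramified part and a part of the form $(\text{units},\pi)$. Your route is much shorter and avoids the deep input of the norm--residue isomorphism; the paper's route is more explicit and, notably, does not use that $\O_K$ is henselian, so it works for an arbitrary discretely valued field as literally stated in the lemma. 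Your proof does need henselianness (you invoke it for the lifting step and implicitly for Hensel's lemma on $\mu_\ell$); since the section is about complete discretely valued fields and every application in the paper is in that setting, this is harmless in context, but you should flag that your argument uses completeness even though the lemma as written does not assume it.
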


\begin{proof}
    Let $\xi \in H^{m}(K, \mu_\ell)$ and suppose first that $\xi$ is a symbol, that is 
     $\xi =  \left(a_1, \ldots, a_m\right)$ with $a_i \in K^\times$. Write $a_i = u_i \pi^{j_i}$ with $u_i$ a unit in $\O_K$ and $0 \leq j_i \leq \ell-1$. Using the relations in (\ref{eq:relations-for-symbols}), we see that
     \begin{align*}
         \xi =& \left(a_1, \ldots, a_m\right) \\
         =& \left(u_1 \pi^{j_1}, \ldots , u_m \pi^{j_m}\right)\\
         =& \left(u_1 , u_2 \pi^{j_2}, \ldots , u_m \pi^{j_m}\right) + \left( \pi^{j_1}, u_2 \pi^{j_2}, \ldots, u_m \pi^{j_m}\right)\\
         =& \left( u_1, u_2, u_3 \pi^{j_3}, \ldots, u_m \pi^{j_m}\right)
         + \left(u_1, \pi^{j_2}, u_3 \pi^{j_3}, \ldots, u_m \pi^{j_m}\right)\\
         & + \left(\pi^{j_1}, u_2, u_3 \pi^{j_3}, \ldots, u_m \pi^{j_m}\right) + \left(\pi^{j_1}, \pi^{j_2}, u_3 \pi^{j_3}, \ldots, u_m \pi^{j_m}\right) \\
         =& \left( u_1, u_2, u_3 \pi^{j_3}, \ldots, u_m^{j_m}\right) + \left( \tfrac{u_1^{j_2}}{u_2^{j_1}}, \pi, u_3 \pi^{j_3}, \ldots, u_m \pi^{j_m}\right) \\
         &+ \left( (-1)^{j_1+j_2}, \pi, u_3\pi^{j_3},\ldots, u_m\pi^{j_m}\right) \\
        \vdots\\
         =&   \left( u_1, u_2, \ldots, u_m\right) + \sum_{t}\left( v_{1}^t, v_{2}^t, \ldots, v_{m-1}^t, \pi \right)
     \end{align*}
     for some units $v_{s}^t$ in $\O_K$. Since $\mu_\ell \subset k$, any cohomology class $\xi \in H^{n+1}\left( K, \mu_\ell \right)$ may be decomposed as a sum of symbols $\xi = \sum_i ( a_{i,1}, \ldots, a_{i,m})$ by Voevodsky's Theorem (see \Cref{Voevodsky}). We decompose each summand of $\xi$ as above to write 
     $$ \xi = \sum_i (a_{i,1}, \ldots, a_{i,m}) = \sum_i ( u_{i,1}, \ldots, u_{i,m}) + \sum_{i,t} (v_{i,1}^t, \ldots, v_{i,m-1}^t,\pi) = \xi_1 + (\xi_2,\pi),$$
     which finishes the proof. 
\end{proof}
Using the description from the previous Lemma, we are now able to explicitly determine a splitting field of $\xi \in H^{n+1}\left( K, \mu_\ell\right)$. We recall from \Cref{rmk:splitting-field}, that a symbol $(a_1, \ldots, a_n) \in H^n(K,\mu_\ell)$ may be split by the extension $L = K(\sqrt[\ell]{a_{i}})$ for any $1 \leq i \leq n$.

\begin{lemma} \label{muellink}
Let $K$ be a complete discretely value field, $\pi$ its local uniformizer, and $k$ its residue field. Suppose that $\cohd_\ell(k)=n$. Assume additionally that $\mu_\ell \subset k$. Then any element $\xi \in H^{n+1}(K, \mu_{\ell})$ is split by the degree $\ell$ field extension $K(\sqrt[\ell]{\pi})$ of $K$. In particular, it has index bounded above by $\ell$.
\end{lemma}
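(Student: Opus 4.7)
The plan is to combine the decomposition from \Cref{lemma:decompose_xi_over_K} with a cohomological dimension argument on the residue field, and then observe that the symbol $(\pi)$ trivially dies in $L = K(\sqrt[\ell]{\pi})$.

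First, I would apply \Cref{lemma:decompose_xi_over_K} with $m = n+1$ to write
\[
  \xi = \xi_1 + (\xi_2, \pi),
\]
where $\xi_1 \in H^{n+1}(K,\mu_\ell)$ is unramified and $\xi_2 \in H^n(K,\mu_\ell)$ is unramified. The task then splits into showing that each summand dies over $L = K(\sqrt[\ell]{\pi})$.

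Next, I would show that $\xi_1 = 0$ already over $K$. Since $\xi_1$ is unramified, the exact sequence (\ref{eq:ramification}) places it in the image of $H^{n+1}_{\mathrm{et}}(\mathcal{O}_K, \mu_\ell) \to H^{n+1}(K,\mu_\ell)$. Because $K$ is complete (hence Henselian), proper base change / rigidity gives $H^{n+1}_{\mathrm{et}}(\mathcal{O}_K, \mu_\ell) \cong H^{n+1}(k, \mu_\ell)$, and the latter vanishes by the assumption $\cohd_\ell(k) = n$. Hence $\xi_1 = 0$, so $\xi = (\xi_2, \pi)$ in $H^{n+1}(K,\mu_\ell)$.

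Finally, I would verify that $(\xi_2, \pi)$ becomes trivial upon restricting to $L = K(\sqrt[\ell]{\pi})$. Setting $\tau = \sqrt[\ell]{\pi} \in L$, we have $\pi = \tau^\ell$, so in $H^1(L, \mu_\ell)$ the class $(\pi)_L = \ell \cdot (\tau)_L$ vanishes because $H^1(L,\mu_\ell)$ is $\ell$-torsion. Cupping with the (restricted) unramified class $\xi_2|_L$ then yields
\[
  (\xi_2, \pi)_L \;=\; \ell \cdot (\xi_2|_L,\, \tau) \;=\; 0
\]
in $H^{n+1}(L,\mu_\ell)$ by the bilinearity relations (\ref{eq:relations-for-symbols}). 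Combining this with $\xi_1 = 0$ gives $\xi_L = 0$, so $L$ is a splitting field of $\xi$; since $[L:K] \le \ell$, we obtain $\ind(\xi) \le \ell$.

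The only point that requires any care is the vanishing of $\xi_1$: one must invoke the identification of the unramified part with the cohomology of $\mathcal{O}_K$ (hence with that of $k$) and use that the cohomological dimension bound for $k$ forces the top-degree group on $K$-side to be purely ramified. Everything else reduces to the symbol manipulations already assembled in \Cref{lemma:decompose_xi_over_K} and \Cref{rmk:splitting-field}.
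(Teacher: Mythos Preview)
Your proof is correct and follows essentially the same route as the paper's: decompose $\xi$ via \Cref{lemma:decompose_xi_over_K}, kill the unramified part $\xi_1$ using $H^{n+1}_{\mathrm{et}}(\mathcal{O}_K,\mu_\ell)\cong H^{n+1}(k,\mu_\ell)=0$ by Henselianity and the cohomological dimension hypothesis, and then split the remaining $(\xi_2,\pi)$ over $K(\sqrt[\ell]{\pi})$ as in \Cref{rmk:splitting-field}. The only difference is that you spell out the symbol computation $(\pi)_L=\ell\cdot(\tau)_L=0$ in detail, whereas the paper simply cites the splitting of symbols over radical extensions.
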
 

\begin{proof}
We note that as $\cohd_\ell(k)=n$, by \Cref{cdK=cdk+1}, we have $\cohd_\ell(K) = n+1$.
    Let $\xi \in H^{n+1}(K, \mu_\ell)$.
    By \Cref{lemma:decompose_xi_over_K}, $\xi$ can be written as $\xi = \xi_1 + (\xi_2,\pi)$ for some unramified $\xi_1 \in H^{n+1}_{et}(\O_K, \mu_{\ell})$ and $\xi_2 \in H^{n}_{et}(\O_K, \mu_\ell)$. We note that if $\xi$ is unramified, then $\xi = \xi_1$. 
    Since $\O_K$ is Henselian, we remark that $ H^{m} _{et}(\O_K,\mu_\ell) \cong H^m(k,\mu_\ell)$ for any $m$. 
Together with the fact that $\cohd_\ell(k) = n$, we deduce that $\xi_1=0$. In particular, if $\xi$ is unramified, then $\xi=\xi_1=0$ is already split over $K$ so that $\ind(\xi) = 0 \leq \ell$. In general, we write any $\xi = (\xi_2,\pi)$ with $\xi_2$ as before. This cohomology class is split by the degree $\ell$ field extension $K(\sqrt[\ell]{\pi})$ of $K$ so that $\ind(\xi)$ is again bounded above by $\ell$.  
\end{proof} 

The following lemma extends \Cref{muellink} to the case $\mu_\ell \not\subset k$. 

\begin{lemma}\label{lemma:muellnotink}
Let $K$ be a complete discretely value field, $\pi$ its local uniformizer, and $k$ its residue field. Suppose that $\cohd_\ell(k)=n$ and assume that the characteristic of $k$ is coprime to $\ell$, but possibly $\mu_\ell \not\subset k$. Then any element  $ \xi \in H^{n+1}\left(K, \mu_\ell\right)$ is split by the degree $\ell$ field extension $K(\sqrt[\ell]{\pi})$ of $K$. In particular, it has index bounded above by $\ell$. 
\end{lemma}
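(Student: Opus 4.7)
The plan is to reduce to the case treated in \Cref{muellink} by passing to the extension $K' = K(\mu_\ell)$ and then descending back to $K$ via a restriction-corestriction argument. Since $\ell$ is coprime to the residue characteristic, the extension $K'/K$ is unramified with residue field $k' = k(\mu_\ell)$ and $[K':K]$ divides $\ell-1$, in particular $[K':K]$ is coprime to $\ell$. Because $K'/K$ is unramified, the uniformizer $\pi$ of $K$ remains a uniformizer of $K'$.

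The next step is to verify that the hypotheses of \Cref{muellink} transfer to $K'$. By \Cref{lemma:cd-finite-ext}, $\cohd_\ell(k') = \cohd_\ell(k) = n$ since $k'/k$ is finite, and by construction $\mu_\ell \subset k'$. Applying \Cref{muellink} to the restriction $\xi_{K'} \in H^{n+1}(K', \mu_\ell)$ shows that $\xi_{K'}$ is split by the degree-$\ell$ extension $K'(\sqrt[\ell]{\pi})$.

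Now set $L = K(\sqrt[\ell]{\pi})$ and $L' = L \cdot K' = K'(\sqrt[\ell]{\pi})$. Note that $X^\ell - \pi$ is Eisenstein, so $[L:K] = \ell$. From the previous step $\xi_{L'} = 0$. I would then apply the standard restriction-corestriction argument: since $[L':L]$ divides $[K':K]$, which is coprime to $\ell$, and the group $H^{n+1}(L, \mu_\ell)$ is $\ell$-torsion, the composition
\[
\mathrm{cor}_{L'/L} \circ \mathrm{res}_{L/L'} = [L':L] \cdot \mathrm{id}
\]
is an isomorphism on $H^{n+1}(L, \mu_\ell)$. Consequently $\mathrm{res}_{L/L'}$ is injective on this group, and from $\xi_{L'} = 0$ we conclude $\xi_L = 0$, so $L$ splits $\xi$ and $\ind(\xi) \leq [L:K] = \ell$.

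The only delicate point is ensuring that $K'/K$ is unramified so that $\pi$ stays a uniformizer in $K'$ (otherwise the splitting field produced by \Cref{muellink} over $K'$ might not descend to $K(\sqrt[\ell]{\pi})$). This follows from the standard fact that adjoining roots of unity of order coprime to the residue characteristic to a complete discretely valued field is an unramified operation, realized by Hensel lifting the separable polynomial $X^\ell - 1$ from $k$. Everything else in the argument is a routine transfer-and-descent.
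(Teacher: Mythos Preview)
Your proof is correct and follows essentially the same route as the paper: adjoin a primitive $\ell$-th root of unity, note that this extension is unramified (so $\pi$ remains a uniformizer), invoke \Cref{muellink} upstairs, and descend via restriction--corestriction using that the cyclotomic degree is prime to $\ell$. If anything you are slightly more careful than the paper, which asserts $[K(\rho):K]=\ell-1$ rather than merely a divisor of $\ell-1$, and which does not explicitly check that $\cohd_\ell$ of the residue field is preserved under the finite extension.
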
 

\begin{proof}
We note that as $\cohd_\ell(k)=n$, by \Cref{cdK=cdk+1}, we have $\cohd_\ell(K) = n+1$.
Since $K$ does not contain $\mu_{\ell}$, let us first consider the degree $\ell-1$ extension $E= K(\rho$) where $\rho$ is a generator of $\mu_{\ell}$. We remark that the characteristic of $k$ is coprime to $\ell$, so that the cyclotomic extension $E$ over $K$ is unramified and thus $\pi$ is a local uniformizer for $E$ as well \cite[Equation (5.5) in the proof of Lemma 5.4]{cassels_1986}. By  \Cref{muellink}, after restricting to $E$, $\xi$ is split by the extension $E(\sqrt[\ell]{\pi})$. Now consider the following diagram of field extensions 
$$ 
\xymatrix{ & E (\sqrt[\ell]{\pi}) \ar@{-}[ld]_{\ell-1} \ar@{-}[rd]^\ell \\
K(\sqrt[\ell]{\pi}) \ar@{-}[rd]_{\ell} & & E \ar@{-}[ld]^{\ell-1} \\
& K }
$$
The composition 
$$ \xymatrix{
H^{n+1}\left(K(\sqrt[\ell]{\pi}), \mu_\ell\right) \ar[r]^{\res} & H^{n+1}\left(E(\sqrt[\ell]{\pi}), \mu_\ell\right) \ar[r]^{\Cor}& H^{n+1}\left(K(\sqrt[\ell]{\pi}), \mu_\ell\right)
}$$ is multiplication by $[E(\sqrt[\ell]{\pi}) : K(\sqrt[\ell]{\pi})] = \ell-1$ \cite[Corollary 1.5.7]{Neukirch}. Thus 
$$(\ell -1) \xi_{K(\sqrt[\ell]{\pi})} = \mathrm{cor} \circ \mathrm{res} \left( \xi_{K(\sqrt[\ell]{\pi})} \right)= \mathrm{cor}(0) = 0.$$ As $\ell$ and $\ell-1$ are coprime, we deduce that $\xi_{K(\sqrt[\ell]{\pi})}=0$ as well and so $K(\sqrt[\ell]{\pi})$ is a splitting field for $\xi$. 
\end{proof}
Finally, using induction on the number of prime factors, we prove the general period-index result for complete discretely value fields. 

\begin{lemma} \label{mun}
    Let $K$ be a complete discretely value field and $k$ its residue field. Suppose that the characteristic of $k$ is coprime to an integer $m>1$. Suppose that $\cohd(k) =n$. Then any element $\xi \in H^{n+1}(K, \mu_m)$ has index bounded above by $m$. 
\end{lemma}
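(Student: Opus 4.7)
The plan is to prove the statement by induction on the number of prime factors of $m$ counted with multiplicity. The base case, $m = \ell$ prime, is exactly \Cref{lemma:muellnotink}. For the inductive step I would pick any prime divisor $\ell$ of $m$, set $m' = m/\ell \ge 2$, and construct a splitting field for $\xi \in H^{n+1}(K,\mu_m)$ as a tower $L'/L/K$ with $[L:K] \le m'$ and $[L':L] \le \ell$, giving $[L':K] \le m$ and hence $\ind(\xi) \le m$.

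The key input is the short exact sequence of Galois modules
\[
0 \longrightarrow \mu_\ell \longrightarrow \mu_m \longrightarrow \mu_{m'} \longrightarrow 0,
\]
where the second map is raising to the $\ell$-th power. Since $\cohd(k) = n$ and $\ell$ is coprime to $\mathrm{char}(k)$, \Cref{cdK=cdk+1} gives $\cohd_\ell(K) \le n+1$, whence $H^{n+2}(K,\mu_\ell) = 0$; the associated long exact sequence then makes $H^{n+1}(K,\mu_m) \to H^{n+1}(K,\mu_{m'})$ surjective. Let $\bar\xi$ denote the image of $\xi$ under this map. By the inductive hypothesis applied to $m'$, there is a finite extension $L/K$ with $[L:K] \le m'$ such that $\bar\xi_L = 0$. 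Restricting the sequence to $L$ and using exactness, $\xi_L$ must lie in the image of $H^{n+1}(L,\mu_\ell) \to H^{n+1}(L,\mu_m)$, so $\xi_L = \iota_*(\eta)$ for some $\eta \in H^{n+1}(L,\mu_\ell)$. Applying \Cref{lemma:muellnotink} over $L$ then produces a degree-$\ell$ extension $L'/L$ splitting $\eta$, so that $\xi_{L'} = \iota_*(\eta_{L'}) = 0$.

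The main thing to check is that the hypotheses needed at each step transfer to the intermediate extension $L$: both \Cref{lemma:muellnotink} and the inductive hypothesis require a complete discretely valued field whose residue field has cohomological dimension $n$ and characteristic coprime to the appropriate integer. Since $L$ is a finite extension of the complete discretely valued field $K$, it is itself complete discretely valued; its residue field $k_L$ is a finite extension of $k$, so \Cref{lemma:cd-finite-ext} gives $\cohd(k_L) = n$. The residue characteristic is unchanged, and since $\mathrm{char}(k)$ is coprime to $m$ it remains coprime to every prime divisor of $m$. These verifications are routine, and I expect no substantive obstacle beyond this bookkeeping.
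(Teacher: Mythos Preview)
Your proof is correct and follows essentially the same approach as the paper: induction on the number of prime factors of $m$, using the short exact sequence $0 \to \mu_\ell \to \mu_m \to \mu_{m/\ell} \to 0$ to first split the image in $H^{n+1}(K,\mu_{m/\ell})$ by the inductive hypothesis and then invoke \Cref{lemma:muellnotink} over the intermediate extension. Your added care in specifying that the induction is on prime factors \emph{counted with multiplicity} and in verifying that the hypotheses (completeness, residue cohomological dimension via \Cref{lemma:cd-finite-ext}, coprimality of the residue characteristic) transfer to the intermediate field $L$ is welcome, though the observation about surjectivity of $H^{n+1}(K,\mu_m) \to H^{n+1}(K,\mu_{m'})$ is not actually needed for the argument.
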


Since $m$ is coprime to the residue characteristic and $\cohd_\ell (k) \leq \cohd(k) =n$ for any prime divisor $\ell$ of $m$, by \Cref{lemma:cd-finite-ext} and \Cref{cdK=cdk+1}, we remark that $\cohd_\ell(K) \leq n+1$ and so $H^r\left(K, \mu_m\right)=0$ for any $r >n+1$. This motivates the term \emph{top cohomology} for $H^{n+1} \left(K, \mu_m\right)$ in this setting. 

\begin{proof}
   We use induction on the number of prime factors of $m$. For $m=\ell$ prime, the result follows from \Cref{lemma:muellnotink}. If $m = s \ell$ for some prime $\ell$, consider the short exact sequence of $K$-modules 
    \begin{equation}\label{split-mun}
    \xymatrix{ 0 \ar[r] & \mu_\ell \ar[r] & \mu_{m} \ar[r]^-{\ell} & \mu_s \ar[r] & 0 }. 
    \end{equation} 
    For any $\xi \in H^{n+1}\left(K, \mu_m\right)$, let $\tilde{\xi}$ be its image under the map  
    $$\xymatrix{ H^{n+1} \left(K, \mu_m\right) \ar[r]& H^{n+1}\left(K, \mu_s\right) }$$
    arising from the long exact sequence in cohomology. By the inductive hypothesis, there is an extension $L_1$ of $K$ of degree at most $s$ such that $L_1$ splits $\tilde{\xi}$. By exactness of the long exact sequence of cohomology induced by (\ref{split-mun}), the restriction $\xi_{L_1}$ of $\xi$ to $L_1$ lifts to an element $\eta \in H^{n+1}\left(L_1, \mu_\ell\right)$.
    $$\xymatrix{
    H^{n+1}\left(L_1, \mu_\ell\right) \ar[r] & H^{n+1} \left(L_1, \mu_m\right) \ar[r]& H^{n+1}\left(L_1, \mu_s\right) \\
    H^{n+1}\left(K, \mu_\ell\right) \ar[r] \ar[u]_{\mathrm{res}}& H^{n+1} \left(K, \mu_m\right) \ar[u]_{\mathrm{res}}\ar[r]& H^{n+1}\left(K, \mu_s\right)\ar[u]_{\mathrm{res}} 
    }$$
    By \Cref{lemma:muellnotink}, there is a field extension $L$ of $L_1$ of degree at most $\ell$ so that $L$ splits $\eta$. As a result, $\xi_L= \eta_L =0$ and so $L$ is a splitting field of $\xi$ of degree 
    $$[L:K] = [L:L_1][L_1:K] \leq \ell s = m. $$ 
    The statement follows. 
\end{proof}

    \section{The main theorems}\label{main}

In this section, we prove our two main theorems. First, we prove an analogue of Tate's common slot lemma over semiglobal fields. When considering coefficients in $\mu_\ell$ for $\ell$ prime, we assume that $\cohd_\ell(k) = n$. For non-prime $m$, there is no notion of $m$-cohomological dimension and therefore we assume instead that $\cohd(k) =n$. Let $F$ be a semiglobal field with residue field $k$. We show in \Cref{commonslot} that any finite set $\mathcal{S}$ of $r$ symbols in $H^{n+2}\left(F, \mu_m^{\otimes n+2}\right)$ for $m$ coprime to the characteristic of $k$ is split by a common extension $L$ of $F$ of degree $m$. We first consider the case that $m= \ell$ is prime and we recall from \Cref{HHK} that 
$$\Sha_{\Omega_L}^{n+2} = \Ker \left( H^{n+2} \left( L, \mu_\ell^{\otimes r} \right) \ra \prod_{\omega \in \Omega_{L}} H^{n+2} \left( L_\omega, \mu_\ell^{\otimes r} \right) \right)=0$$
for any integer $r$. We denote the restriction of any divisorial valuation $ \omega \in \Omega_L$ to $F$ by $\nu$. As $\Sha_{\Omega_L}^{n+2} =0$, it suffices to show that, for any $s \in \mathcal{S}$, the restriction $s_{F_\nu}$ splits over $L_\omega$ for any divisorial valuation $\omega \in \Omega_L$. We then extend the result to the case that $m$ is any number coprime to the characteristic of $k$. 

If $\mu_m \subset k$, then \Cref{commonslot} implies that period and index agree in the top cohomology for semiglobal fields (see \Cref{global1}). In the following, we remove the assumption that $\mu_m \subset k$ while retaining the equality for period and index in top cohomology. More precisely, in \Cref{thm:maintheorem} we show that, for a semiglobal field $F$, any class $\xi$ in $H^{n+2}\left(F, \mu_m\right)$ has index bounded above by $m$. \\

Recall the following setting for semiglobal fields. Let $K$ be a complete discretely valued field and denote by $\O_K$ its ring of valuations. Denote by $C$ a smooth, projective curve over $K$, and  $F=K(C)$ is the function field of $C$. In particular, $F$ is a semiglobal field. Let $k$ be the residue field of $K$. We discussed models and divisorial places in \Cref{semiglobal}, but let us recall: 

\begin{defn}[Model]
A \emph{regular  model} of the curve $C$ is a flat, projective, regular, integral $\mathcal{O}_K$-scheme $\mathfrak{X}$ with $\mathfrak{X}_K = \mathfrak{X} \times_{\mathcal{O}_K} K = C$. We denote the reduced special fiber by $\mathfrak{X}_k$.  
$$\xymatrix{
\mathfrak{X}_k \ar[d] & \ar[l]\mathfrak{X} \ar[r]\ar[d] & C\ar[d] \\
\Spec(k) & \ar[l] \Spec (\mathcal{O}_K) \ar[r] & \Spec(K) 
}$$
\end{defn}
In the course of the proof, we will choose an appropriate model $\mathfrak{X}$ so that the given cohomology class can only ramify along a divisor with normal crossings. Recall the notation $\Omega_F$ from \Cref{semiglobal} for the set of divisorial places on $F$, which is the set of valuations on $F$ corresponding to codimension one points on all (regular, projective) models of $C$. To split the cohomology class, it will suffice to find an extension of $F$, which splits the cohomology class locally everywhere by \Cref{HHK}. 

We begin with the following.

\begin{lemma}
\label{twodim-decomp}Let $A$ be a complete regular local two-dimensional ring  with field of fractions $E$
and residue field $k$. Let $(\pi, \delta)$ be the maximal ideal of $A$.
For  $m \geq 2$ and $1 \leq i \leq m$, let $a_i = u_i\pi^{r_i}\delta^{s_i} \in A$ for some units $u_i \in A$ and
$r_i , s_i \in \Z$.  Then 
$$(a_1, \ldots , a_{m})  = \xi_1 + (\xi_2, \pi) + (\xi_3,\delta) + (\xi_4, \pi, \delta),$$
where $\xi_1 \in H^{m}_{et}(A, \mu_\ell^{\otimes m})$, $\xi_2, \xi_3 \in H^{n-1}_{et}
(A, \mu_\ell^{\otimes m-1})$,
$\xi_4 \in H^{n-2}_{et}(A, \mu_\ell^{\otimes m-2})$.
\end{lemma}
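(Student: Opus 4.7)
The plan is to imitate the proof of \Cref{lemma:decompose_xi_over_K}, but now keeping track of two uniformizers simultaneously. Since the lemma statement concerns a single symbol (rather than an arbitrary cohomology class), no appeal to Voevodsky's theorem is needed; I only have to manipulate one symbol with the relations (\ref{eq:relations-for-symbols}).

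First, I would expand $(a_1,\ldots,a_m)=(u_1\pi^{r_1}\delta^{s_1},\ldots,u_m\pi^{r_m}\delta^{s_m})$ multilinearly, using the additivity $(xy,c)=(x,c)+(y,c)$ that comes from the Kummer identification in \eqref{eq:Kummer}. Each of the $m$ slots then splits as a sum of three symbols whose entry is $u_j$, $\pi^{r_j}$, or $\delta^{s_j}$, producing a sum of $3^m$ symbols whose entries all lie in $\{u_j,\pi^{r_j},\delta^{s_j}\}_{1\le j\le m}$. Next, in each of these $3^m$ symbols, I would move all the $\pi$-entries and $\delta$-entries to the right using the alternating property $-(a,b)=(b,a)$, and consolidate repeats: whenever $\pi$ occurs in two distinct slots I apply the identities $n(a,b)=(a^n,b)$ and $(\pi,\pi)=(-1,\pi)$ to collapse one of the two $\pi$'s into a unit of $A$ (a power of $-1$), and similarly for $\delta$. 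Iterating this reduction, each symbol is brought into one of exactly four shapes:
\begin{equation*}
(u_1',\ldots,u_m'),\quad (v_1',\ldots,v_{m-1}',\pi),\quad (w_1',\ldots,w_{m-1}',\delta),\quad (z_1',\ldots,z_{m-2}',\pi,\delta),
\end{equation*}
with all of $u_j',v_j',w_j',z_j'$ units in $A$. Collecting terms by shape gives the decomposition $(a_1,\ldots,a_m)=\xi_1+(\xi_2,\pi)+(\xi_3,\delta)+(\xi_4,\pi,\delta)$.

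To check that each $\xi_i$ lies in étale cohomology of $A$ (not merely of $E$), I would invoke the Kummer sequence for $\mu_\ell$ on $\Spec A$: since $\ell$ is invertible in $A$ (the residue characteristic is coprime to $\ell$), this sequence gives $A^\times/(A^\times)^\ell \hookrightarrow H^1_{et}(A,\mu_\ell)$. Each of the units $u_j',v_j',w_j',z_j'$ therefore contributes a class in $H^1_{et}(A,\mu_\ell)$, and the étale cup-product places $\xi_1$ in $H^m_{et}(A,\mu_\ell^{\otimes m})$, $\xi_2,\xi_3$ in $H^{m-1}_{et}(A,\mu_\ell^{\otimes m-1})$, and $\xi_4$ in $H^{m-2}_{et}(A,\mu_\ell^{\otimes m-2})$; their images in $H^\ast(E,\mu_\ell^{\otimes\ast})$ then assemble into the desired equality.

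The main obstacle is purely combinatorial bookkeeping in the reduction step: ensuring that the repeated use of $(\pi,\pi)=(-1,\pi)$, $(\delta,\delta)=(-1,\delta)$, and the alternating property really does funnel every one of the $3^m$ expanded symbols into one of the four advertised shapes, without leaving any stray term with two or more copies of $\pi$ or $\delta$. This is routine because the relations in (\ref{eq:relations-for-symbols}) are closed under the operations used, and the integers $r_i,s_i$ may be reduced modulo $\ell$ throughout since the cohomology is $\ell$-torsion; but the signs and permutations must be tracked carefully if one wants an explicit formula.
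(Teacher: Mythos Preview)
Your proposal is correct and follows exactly the approach the paper intends: the paper's own proof is a one-line reference back to \Cref{lemma:decompose_xi_over_K}, noting only that the hypothesis $\mu_\ell\subset k$ there was needed solely to invoke Voevodsky and write an arbitrary class as a sum of symbols, which is unnecessary here since the input is already a single symbol. Your expansion into $3^m$ terms, reduction of repeated $\pi$'s and $\delta$'s via $(\pi,\pi)=(-1,\pi)$ and the alternating relation, and the observation that units lift to $H^1_{et}(A,\mu_\ell)$ via Kummer on $\Spec A$ are precisely the ``same lines'' the paper has in mind, spelled out in more detail than the paper bothers to give.
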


\begin{proof} The proof follows on the same lines as the proof of \Cref{lemma:decompose_xi_over_K}. We remark that the assumption $\mu_\ell \subset k$ in \Cref{lemma:decompose_xi_over_K} is only necessary to write the cohomology class as a sum of symbols. 
\end{proof}

We now prove an analogue of Tate's common slot lemma for number fields over semiglobal fields. We remark that the roots of unity $\mu_m$ may not be contained in $k$ for this result. 

\begin{thm}\label{commonslot}
    Let $F$ be a semiglobal field over a complete discretely value field $K$, with residue field $k$. 
   Let $\ell$ be a prime not equal to char$(k)$. Assume that $\cohd_\ell(k) =n$. 
   Let 
    $$\mathcal{S} = \left\{ \left( a_{i,1},a_{i,2}, \ldots,a_{i,n+2}\right) : 1 \leq i \leq r \right\}$$
    be a set of $r$ symbols in $H^{n+2}\left( F, \mu_\ell^{\otimes n+2} \right)$. Then there is a field extension $L$ of $F$ of degree $\ell$ which splits all symbols in $\mathcal{S}$.
\end{thm}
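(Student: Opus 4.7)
The plan is to apply the local--global principle of \Cref{HHK} to construct the extension $L$ explicitly as $L = F(\sqrt[\ell]{f})$ for a suitable $f \in F^\times$. By \Cref{HHK}, $\Sha_{\Omega_L}^{n+2}=0$, so it suffices to check, for every divisorial valuation $\omega$ on $L$ and every $s \in \mathcal{S}$, that $s_{L_\omega}=0$. Writing $\nu = \omega|_F$, the completion $F_\nu$ is a complete discretely valued field whose residue field has cohomological dimension $n+1$ (by \Cref{cdK=cdk+1} and \Cref{transcendance}, according as $\nu$ is horizontal or vertical). Applied to $F_\nu$, the argument of \Cref{mun} (extended to degree $n+2$) shows that $s_{F_\nu}$ is split by $F_\nu(\sqrt[\ell]{g})$ for any $g \in F_\nu^\times$ with $\nu(g)$ coprime to $\ell$. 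Thus the task reduces to finding a single $f \in F^\times$ such that $\nu(f)$ is coprime to $\ell$ at every divisorial $\nu \in \Omega_F$ where some $s_{F_\nu}$ is nonzero.

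\textbf{Choice of model and of $f$.} I would first choose, via Lipman's theorem and embedded resolution, a regular proper model $\mathfrak{X}$ of $F$ on which the reduced special fiber $\mathfrak{X}_k$, together with the horizontal closures of the divisors of all $a_{i,j}$, forms a strict normal crossings divisor $D = D_1 \cup \cdots \cup D_N$. Away from $D$, every $a_{i,j}$ is a unit, so each $s \in \mathcal{S}$ is unramified there; consequently $s_{F_\nu}$ lies in the image of $H^{n+2}(\kappa(\nu),\mu_\ell)$, which vanishes since $\cohd_\ell(\kappa(\nu)) \leq n+1$. By weak approximation for the finite independent set $\{v_{D_1},\ldots,v_{D_N}\}$, pick $f \in F^\times$ with $v_{D_i}(f) \equiv 1 \pmod{\ell}$ for every $i$. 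An exceptional divisor $E$ produced by blowing up a crossing $P \in D_i \cap D_j$ then satisfies $v_E(f) \equiv v_{D_i}(f)+v_{D_j}(f) \equiv 2 \pmod{\ell}$, which is coprime to $\ell$ when $\ell$ is odd; for $\ell = 2$ one first performs additional blow-ups to separate the crossings and then re-adjusts $f$ so that the valuation at each new exceptional divisor is again odd.

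\textbf{Local verification and main obstacle.} Let $\omega \in \Omega_L$ and $\nu = \omega|_F$. If the center of $\nu$ on $\mathfrak{X}$ avoids $D$, then $s$ is unramified at $\nu$ and $s_{F_\nu}=0$ by the cohomological dimension bound, so $s_{L_\omega}=0$. If $\nu = v_{D_i}$ for some $i$, then $\nu(f)$ is coprime to $\ell$, so $L_\omega/F_\nu$ is a totally ramified degree-$\ell$ extension containing an $\ell$-th root of a uniformizer up to a unit, and the argument of \Cref{mun} in degree $n+2$ gives $s_{L_\omega}=0$. The main technical obstacle is the remaining case, where $\nu$ arises from a blow-up of a crossing $P \in D_i \cap D_j$. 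Here I would apply \Cref{twodim-decomp} in the two-dimensional complete local ring at $P$ to decompose $s$ as $\xi_1+(\xi_2,\pi_i)+(\xi_3,\pi_j)+(\xi_4,\pi_i,\pi_j)$ with $\xi_1,\xi_2,\xi_3,\xi_4$ unramified, compute the residue at the exceptional divisor from this decomposition, and use that $v_E(f)$ has been arranged coprime to $\ell$ to kill $s_{L_\omega}$. Once every divisorial place of $L$ has been handled in this way, \Cref{HHK} gives $s_L=0$ for every $s \in \mathcal{S}$, and $L$ is the desired common splitting field of degree $\ell$ over $F$.
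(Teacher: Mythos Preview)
Your overall strategy coincides with the paper's: apply \Cref{HHK}, set $L=F(\sqrt[\ell]{f})$ for a well-chosen $f$, and verify splitting at every place by a case analysis on the center of the restricted valuation on a fixed normal-crossings model $\mathfrak{X}$. The cases ``center off $D$'' and ``center a component $D_i$'' are handled exactly as in the paper. The essential difficulty, as you identify, is the nodal case, and here your argument has a genuine gap.

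Weak approximation only pins down $v_{D_i}(f)\bmod \ell$; it gives no control over the \emph{remaining} components of $\div(f)$. Write $\div_{\mathfrak X}(f)=\sum_i a_i D_i + E$ with $\mathrm{supp}(E)\cap\{D_i\}=\varnothing$. If a component of $E$ passes through a node $P\in D_i\cap D_j$, then for the exceptional divisor $E_P$ of the blow-up at $P$ one has $v_{E_P}(f)=a_i+a_j+m_P(E)$, not $a_i+a_j$; your congruence $v_{E_P}(f)\equiv 2\pmod \ell$ is therefore unjustified. Worse, there are infinitely many divisorial valuations centered at $P$ (iterated blow-ups), and your argument only addresses the first exceptional divisor. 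Finally, the $\ell=2$ workaround (``blow up further and re-adjust $f$'') is circular: any re-adjustment by weak approximation again fails to control the new error divisor near the new nodes.

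The paper avoids all of this by choosing the element more carefully. It fixes a finite set $\mathcal{P}$ of closed points containing every node, forms the semilocal UFD $R=\mathcal{O}_{\mathfrak X,\mathcal{P}}$, lets $\pi_i$ generate the height-one prime of $C_i$ in $R$, and takes $g=\prod_i\pi_i$. This guarantees that in $\mathcal{O}_{\mathfrak X,P}$, for each node $P\in C_i\cap C_j$, one has $g=u\,\pi_i\pi_j$ with $u$ a unit. Then, working directly in the two-dimensional complete local ring $\widehat{\mathcal O_{\mathfrak X,P}}$ (no blow-ups needed), \Cref{twodim-decomp} plus the vanishing $H^{n+1}(k(P),\mu_\ell)=H^{n+2}(k(P),\mu_\ell)=0$ reduce every symbol to the form $(\xi_4,\pi_i,\pi_j)$, and a short manipulation gives
\[
(\xi_4,\pi_i,\pi_j)=(\xi_4,-\pi_i,\pi_i\pi_j)=(\xi_4,-\pi_i,u^{-1}g)=(\xi_4,-\pi_i,g),
\]
which is split by $F_\nu(\sqrt[\ell]{g})$ for \emph{every} valuation $\nu$ centered at $P$, uniformly in $\ell$ (no special case $\ell=2$). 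Replacing your weak-approximation $f$ by this semilocal $g$, and replacing the blow-up computation by this local identity, closes the gap and recovers the paper's proof.
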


\begin{proof}
 We denote by $s_i = \left( a_{i,1}, \ldots, a_{i,n+2}\right)$ for $1 \leq i \leq r$ the symbols in $\mathcal{S}$. By \Cref{HHK}, it suffices to prove the existence of a field extension $L$ of $F$ satisfying $[L:F] \leq \ell$ with the property that the restriction $\left( s_i\right)_{L_\omega}$ of $s_i$ to $L_\omega$ is trivial for any divisorial place $\omega \in \Omega_L$. We will show the stronger statement that this is true for any discrete valuation $\omega$ of $L$, not just any divisorial valuation. \\

We now construct the desired splitting field $L$. Let $\mathfrak{X}$ be any model of $F$ so that the function field of $\X$ is $F$. We recall that $\X_k$ denotes the reduced special fiber of $\mathfrak{X}$. Abusing notation, we denote a representative of $a_{i,j}$ in $F^\times$ by $a_{i,j}$ as well. As an element in the function field of $\mathfrak{X}$, $a_{i,j}$ is a rational function on $\mathfrak{X}$. Let $D_{i,j} = \div(a_{i,j})$ be the corresponding divisor on $\X$. After blowing up, we may assume that there are regular curves $C_i$ so that 
$$ \mathfrak{X}_k \cup \left( \bigcup_{i,j} \mathrm{supp}\left(\mathrm{div}(a_{i,j})\right) \right) =  \bigcup_{i} C_i $$
and the $C_i$ have normal crossings. We denote $\mathcal{C}= \{ C_i\}$. We call a point $P$ on $\mathfrak{X}$ a \emph{nodal point of $\mathcal{C}$} if it lies on both $C_i$ and $C_j$ for some $i \neq j$. Choose a finite set of closed points $\mathcal{P} = \{P_j\}$ that includes all nodal points of $\mathcal{C}$ and at least one point on the components of $\X_k$. We note that even though the $C_i$ may not be on the reduced special fiber $\X_k$, all closed points $P_j$ in $\mathcal{P}$ lie on $\X_k$.

Since $\X$ is projective, there is an affine open subscheme $U$ of $\X$ that contains the finite set of closed points $\mathcal{P} = \{ P_j\}$. We denote the affine coordinate ring of $U$ by $A$. Since $\X$ is regular, $A$ is regular as well. Let $R$ be the semilocalization of $A$ at $\mathcal{P}$. Because $A$ is regular, we deduce that $R$ is a semilocal regular domain and hence a unique factorization domain. Therefore each $C_i$ defines a height one prime ideal in $R$, whose generator we denote by $\pi_i$. Finally, let $g = \prod_i \pi_i$ and set $L= F(\sqrt[\ell]{g})$. We aim to show that $L$ is a splitting field for $s_i$ for all $1 \leq i\leq r$. Any discrete valuation $\omega$ on $L$ restricts to a discrete valuation $\nu$ on $F$.  We claim that $L_\omega$ is a splitting field for the restriction $\left( s_i\right)_{F_\nu}$ of $s_i$ to $F_\nu$ for any $1 \leq i \leq r$. Throughout the remainder of the proof, we fix $1 \leq i \leq r$ and we denote $s = s_i$. We remark that by properness of the model $\X$, any discrete valuation $\nu$ is centered on a closed point or on a point of codimension one. We can define, for each point $P \in \mathcal{P}$, a two-dimensional regular local ring $\mathcal{O}_{\mathfrak{X},P}$. Let $m_{\mathfrak{X},P}$ be the maximal ideal of $\mathcal{O}_{\mathfrak{X},P}$, and denote by  $\widehat{\mathcal{O}_{\mathfrak{X},P}}$  the completion of $\mathcal{O}_{\mathfrak{X},P}$ with respect to this maximal ideal.

\begin{itemize}
    \item \textbf{Case 1: Suppose that $\nu$ is centered on a codimension one point $P$ on $\mathfrak{X}$.} There are two possibilities for $P$, either  $P \not\in \mathcal{C}$ or $P \in \mathcal{C}$.
        \begin{itemize}
            \item \textbf{Case 1a: Suppose that $P \neq C_i$ for all $i$. } By definition of the $C_i$'s and our hypothesis, $a_{ij}$ is a unit in $\O_{\X,P}$ for all $i$ and $j$. Thus $s$ is unramified at the completion $\widehat{\O_{\X,P}}$ and so
            \begin{equation}\label{eq:xi_Fnu} s_{F_\nu} \in H^{n+2}_{\text{nr}} \left( F_\nu, \mu_\ell^{\otimes n+2}\right) \cong H^{n+2}_{et} \left(\widehat{\O_{\X,P}}, \mu_\ell^{\otimes n+2}\right) \cong H^{n+2} \left( k_\nu, \mu_\ell^{\otimes n+2}\right),\end{equation} where the last isomorphism holds since $\widehat{\O_{\X,P}}$ is Henselian. We remark that, since $P$ is a codimension one point, the residue field $k_\nu$ of $F_\nu$ has transcendence degree 1 over $k$, and hence has the property that $\cohd_\ell(k_\nu) = n+1$. Using this and (\ref{eq:xi_Fnu}), we have that $s_{F_\nu} = 0$ and therefore its restriction $s_{L_\omega}$ also vanishes. 
            
 \item \textbf{Case 1b: Suppose that $P = C_i$ for some $i$. }  
Since $g$ is a parameter at $C_i$, $F_\nu(\sqrt[\ell]{g}) = L_w$,
  by \Cref{muellink},  $s_{F_\nu}$ are  split by the extension  $L_w$. 
 \end{itemize}
 
    \item \textbf{Case 2: Suppose that $\nu$ is centered on a codimension two point $P$ on $\mathfrak{X}$}. We distinguish three cases (2a) $P$ is a nodal point of $C_i$ and $C_j$ for some $i \neq j$, (2b) $P$ is a point on exactly one of the $C_i$, and (2c) $P$ does not lie on any $C_i$. Denote by $k(P)$ the residue field of both $\mathcal{O}_{\X,P}$ and its completion.
    \begin{itemize}
        \item\textbf{Case 2a: Suppose that $P$ is a nodal point on $\bigcup_i C_i$.} Since $\mathfrak{X}$ has normal crossings, a nodal point will be at the intersection of two distinct curves $C_i$ and $C_j$. Without loss of generality, assume that $P$ is the intersection point of $C_1$ and $C_2$. We remark that the maximal ideal of the local ring $\O_{\mathfrak{X},P}$ is generated by $\pi_1$ and $\pi_2$. Denote the completion of $\O_{\mathfrak{X},P}$ by $\widehat{\O_{\mathfrak{X},P}}$, a complete local ring of dimension $2$. The residue field $k(P)$ is a finite field extension of $k$, thus $\cohd_\ell(k(P)) = \cohd_\ell(k) =n$. Furthermore, the field of fractions of $\widehat{\O_{\mathfrak{X},P}}$ is contained in the field $F_\nu$, which is a complete discretely value field.

   By \Cref{twodim-decomp},  we can decompose $s_{F_\nu}$ as
        $$s_{F_\nu} = \xi_1 + \left( \xi_2 ,\pi_1\right) + \left(\xi_3,\pi_2\right) + \left(\xi_4 ,\pi_1,\pi_2\right),$$
        for unramified cohomology classes  $\xi_1 \in H^{n+2}_{et}\left(\widehat{\O_{\X,P}}, \mu_\ell^{\otimes n+2}\right),$ $ \xi_2, \xi_3 \in H^{n+1}_{et}\left(\widehat{\O_{\X,P}}, \mu_\ell^{\otimes n+1}\right),$ and $ \xi_4 \in H^{n}_{et}\left(\widehat{\O_{\X,P}}, \mu_{\ell}^{\otimes n}\right)$. In conclusion, $\xi_1 , \xi_2,\xi_3$ are unramified at $\O_{\mathfrak{X},P}$ and vanish over its completion as 
        $$H^{n+i}_{et}\left(\widehat{\O_{\mathfrak{X},P}}, \mu_\ell^{\otimes n+i}\right) \cong H^{n+i}\left(k(P), \mu_\ell^{\otimes n+i}\right)$$  for $i = 1, 2$ and $\cohd_\ell k(P) =n.$ We conclude that $s_{F_\nu} = \left( \xi_4, \pi_1, \pi_2\right)$. 
        In $\O_{\X,P}$, we have that $g= u \pi_1 \pi_2$ and so
        \begin{align*} s_{F_\nu} &= \left( \xi_4 ,\pi_1, \pi_2\right) \\
        &=  \left(\xi_4,- \pi_1,\pi_1 \pi_2\right) \\
        &= \left( \xi_4 ,-\pi_1,u^{-1} g\right)\\
        &= \left(\xi_4 ,u,-\pi_1\right) + \left( \xi_4 ,-\pi_1,g\right)\\
        &= \left( \xi_4, -\pi_1,g\right), \end{align*} where the last equality holds since $\left( \xi_4,u\right)$ is unramified in 
        $$H^{n+1}\left(\widehat{\O_{\X,P}}, \mu_{\ell}^{\otimes n+1}\right) \cong H^{n+1} \left(k(P), \mu_\ell^{\otimes n+1}\right) = 0,$$
         since $\kappa(P)$  has cohomological dimension $n$. Using \Cref{rmk:splitting-field}, we see that $L_\omega = F_\nu \left( \sqrt[\ell]{g}\right)$ splits $s_{F_\nu}$. 
        \item \textbf{Case 2b: Suppose that $P$ is a point on exactly one of the $C_i$.} Without loss of generality, we may assume that $P$ lies on $C_1$. By \Cref{twodim-decomp}, the class $s_{\widehat{\O_{\X,P}}}$ becomes $\xi_1+(\xi_2,\pi_1)$ for some  $\xi_1 \in
         H^{n+2}_{et}\left( \widehat{\O_{\X,P}}, \mu_\ell^{\otimes n+2}\right)$ and $\xi_2 \in 
         H^{n+1}_{et}\left( \widehat{\O_{\X,P}}, \mu_\ell^{\otimes n+1}\right)$. As in Case 2a, $s$ is split by the degree $\ell$ extension given by attaching an $\ell$-th root of $g = u \pi_1$, where $\pi_1$ is the local uniformizer at $C_1$ for $\nu$.         
        \item \textbf{Case 2c: Suppose that $P$ does not lie on $C_i$ for any $i$.} Then $a_{ij}$ is a unit in $\O_{\X,P}$ and therefore $s_{F_\nu}$ is an unramified cohomology class in 
        $$H^{n+2}_{et}\left( \widehat{\O_{\X,P}}, \mu_\ell^{\otimes n+2}\right) \cong H^{n+2} \left( k(P), \mu_\ell^{\otimes n+2}\right).$$ The residue field $k(P)$ is a finite extension of $k$, implying that $\cohd_\ell(k(P)) = \cohd_\ell(k) = n$. Hence, the class $s_{\widehat{\O_{\mathfrak{X},P}}}$ vanishes, and so $s_{F_\nu}$ does as well. 
    \end{itemize}
\end{itemize}
\end{proof}

\begin{cor}\label{commonslot-m}
   Let $F$ be a semiglobal field over a complete discretely value field $K$, with residue field $k$. 
  Let $m \geq 2$ be  coprime to char$(k)$. Assume that $\cohd(k) =n$. 
  Suppose that  $\mu_m \subset  F$.     Let 
    $$\mathcal{S} = \left\{ \left( a_{i,1},a_{i,2}, \ldots,a_{i,n+2}\right) : 1 \leq i \leq r \right\}$$
    be a set of $r$ symbols in $H^{n+2}\left( F, \mu_m^{\otimes n+2} \right)$. Then there is a field extension $L$ of $F$ of degree $\ell$ which splits all symbols in $\mathcal{S}$.
\end{cor}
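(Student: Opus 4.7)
The plan is to reduce to the prime case from \Cref{commonslot} by inducting on the number of prime factors of $m$ (counted with multiplicity), following the pattern of the proof of \Cref{mun}. The base case $m = \ell$ prime is precisely \Cref{commonslot}.

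For the inductive step, factor $m = s\ell$ with $\ell$ prime and $1 < s < m$, and consider the short exact sequence of $G_F$-modules
$$0 \longrightarrow \mu_\ell \longrightarrow \mu_m \xrightarrow{\zeta \mapsto \zeta^\ell} \mu_s \longrightarrow 0.$$
Because $\mu_m \subset F$, all relevant Tate twists trivialize as $G_F$-modules, and compatibility of cup products with the coefficient change sends each symbol $(a_{i,1},\ldots,a_{i,n+2}) \in H^{n+2}(F,\mu_m^{\otimes n+2})$ to the symbol with the same entries in $H^{n+2}(F,\mu_s^{\otimes n+2})$. Since $\mu_s \subset \mu_m \subset F$ and $s$ has strictly fewer prime factors than $m$, the inductive hypothesis applies to these $r$ image symbols and yields a field extension $L_1/F$ with $[L_1:F] \leq s$ splitting them all simultaneously.

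Over $L_1$, each restriction $(s_i)_{L_1} \in H^{n+2}(L_1, \mu_m^{\otimes n+2})$ maps to zero in $H^{n+2}(L_1, \mu_s^{\otimes n+2})$, so by exactness of the induced long exact sequence it lifts to a class $\eta_i \in H^{n+2}(L_1,\mu_\ell^{\otimes n+2})$. The $\eta_i$ need not themselves be symbols, but by Voevodsky's theorem (see \Cref{Voevodsky}, applied in the setting $\mu_\ell \subseteq F \subseteq L_1$) each $\eta_i$ is a finite sum of symbols in $H^{n+2}(L_1,\mu_\ell^{\otimes n+2})$. The residue field of $L_1$ is a finite extension of $k$, so by \Cref{lemma:cd-finite-ext} has cohomological dimension $n$ and, in particular, $\ell$-cohomological dimension at most $n$; when equality holds, apply the base case \Cref{commonslot} over $L_1$ to the collected finite set of symbols to obtain an extension $L/L_1$ with $[L:L_1] \leq \ell$ splitting every $\eta_i$ (if the $\ell$-cohomological dimension is strictly less than $n$, then $H^{n+2}(L_1,\mu_\ell^{\otimes n+2}) = 0$, forcing every $\eta_i = 0$ and every $(s_i)_{L_1} = 0$, so one may simply take $L = L_1$). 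In either case $L$ splits every $s_i$ over $F$, and
$$[L:F] = [L:L_1]\,[L_1:F] \leq \ell s = m.$$

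The main technical point to verify is the compatibility of the coefficient-change map with cup products and with the long exact sequence, but this is clean precisely because $\mu_m \subset F$ trivializes all Tate twists and identifies the map on $H^1$ with the natural surjection $F^\times/(F^\times)^m \to F^\times/(F^\times)^s$. The minor subtlety that the lifts $\eta_i$ are not individually symbols is absorbed by \Cref{commonslot} being formulated for arbitrary finite sets of symbols, not just a single symbol.
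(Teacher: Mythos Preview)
Your proposal is correct and follows essentially the same approach as the paper, which simply says the result follows from \Cref{commonslot} together with arguments analogous to the proof of \Cref{mun}. You supply the details the paper omits, in particular the observation that the lifts $\eta_i$ need not be symbols but decompose via Voevodsky into finitely many symbols to which \Cref{commonslot} applies simultaneously, and the edge case $\cohd_\ell(k) < n$ where the relevant cohomology already vanishes.
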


\begin{proof} We note that the assumption $\cohd(k) = n$ implies that $\cohd_\ell (k) \leq \cohd(k) =n$ for any prime  $\ell$  dividing $m$.
The proof follows from \Cref{commonslot} and arguments analogous to the proof of \Cref{mun}. 
\end{proof}

If $k$ contains $\mu_m$, the previous theorem implies that period and index agree in top cohomology. We later remove this assumption. 

\begin{prop} \label{global1}
Let $F$ be the function field of the smooth projective curve $C$ over the complete discretely value field $K$, with residue field $k$.  Assume additionally that $\mu_m\subset k$, $m$ is coprime to the characteristic of $k$, and $\cohd(k) =n$.  Then any element  $\xi$  in $H^{n+2}(F, \mu_m)$ has index bounded above by $m$. In fact, there is a field extension $L$ of $F$ of degree $m$ so that $\xi_L =0$.
\end{prop}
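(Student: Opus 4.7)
The plan is to reduce the statement to \Cref{commonslot} by expressing $\xi$ as a finite sum of symbols in the prime case, and then to reduce general $m$ to the prime case by induction on the number of prime factors of $m$ (with multiplicity), mirroring the proof of \Cref{mun}.

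For $m = \ell$ prime, since $\mu_\ell \subset k \subset F$ the modules $\mu_\ell^{\otimes i}$ are all isomorphic to $\mu_\ell$ as $G_F$-modules, so Voevodsky's theorem (as recalled in \Cref{Voevodsky}) allows us to write the given class as a finite sum of symbols,
$$\xi = \sum_{j=1}^r (a_{j,1},\ldots,a_{j,n+2}) \in H^{n+2}(F,\mu_\ell^{\otimes(n+2)})\cong H^{n+2}(F,\mu_\ell).$$
Applying \Cref{commonslot} to this finite set of $r$ symbols produces a degree $\ell$ extension $L/F$ splitting every summand, and hence splitting $\xi$.

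For composite $m$, write $m = s\ell$ with $\ell$ prime and consider the short exact sequence of Galois modules $0 \to \mu_\ell \to \mu_m \to \mu_s \to 0$, where the surjection is the $\ell$-th power map. The induced long exact sequence in cohomology sends $\xi$ to a class $\widetilde{\xi} \in H^{n+2}(F, \mu_s)$, which by the inductive hypothesis has a splitting field $L_1/F$ of degree at most $s$. Exactness then lifts $\xi_{L_1}$ to some $\eta \in H^{n+2}(L_1, \mu_\ell)$ (using also that $H^{n+3}(L_1,\mu_\ell) = 0$ since $\cohd_\ell(L_1) \leq n+2$), and applying the prime case to $\eta$ over $L_1$ yields a degree $\ell$ extension $L/L_1$ with $\eta_L = 0$. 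Thus $\xi_L = 0$ and $[L:F] \leq \ell s = m$.

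The main obstacle is to verify that the hypotheses of the proposition persist at each inductive step: one needs $L_1$ to remain a semiglobal field over a complete discretely valued base whose residue field $k_1$ still satisfies $\mu_m \subset k_1$ and $\cohd(k_1) = n$. This is handled by letting $K_1$ be the algebraic closure of $K$ in $L_1$, which is a finite extension of $K$ and therefore itself a complete discretely valued field, with residue field $k_1$ a finite extension of $k$. Then $L_1$ is a one-variable function field over $K_1$, one has $\mu_m \subset k \subset k_1$, and \Cref{lemma:cd-finite-ext} gives $\cohd(k_1) = n$, so the induction closes.
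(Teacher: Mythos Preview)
Your proposal is correct and follows essentially the same approach as the paper: write $\xi$ as a sum of symbols via Voevodsky when $m=\ell$ is prime and apply \Cref{commonslot}, then handle composite $m$ by the inductive argument of \Cref{mun} using the exact sequence $0\to\mu_\ell\to\mu_m\to\mu_s\to 0$. Your extra verification that $L_1$ is again a semiglobal field with residue field satisfying $\mu_m\subset k_1$ and $\cohd(k_1)=n$ is a welcome detail that the paper leaves implicit; the parenthetical about $H^{n+3}(L_1,\mu_\ell)$ is unnecessary, since the lift of $\xi_{L_1}$ to $\eta$ already follows from exactness at $H^{n+2}(L_1,\mu_m)$.
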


\begin{proof}
   Suppose $m=\ell$ is a prime number coprime to the characteristic of $k$ and let $\xi \in H^{n+2}(F, \mu_\ell)$. By Voevodsky's result (see \Cref{Voevodsky}), we may write 
    $$\xi = \sum_{i=1}^r \left( a_{i,1}, \ldots, a_{i,n+1}\right) $$
    for some $a_{i,j} \in F^\times$. We denote  $$\mathcal{S} = \left\{ \left( a_{i,1},a_{i,2}, \ldots,a_{i,n+2}\right) : 1 \leq i \leq r \right\}.$$ By \Cref{commonslot} there is some field extension $L$ of $F$ of degree $\ell$ which splits all symbols in $\mathcal{S}$. As a result, the sum $\xi$ of the symbols in $\mathcal{S}$ also splits over $L$. The general case $m$ coprime to the characteristic of $k$ with $\mu_m \subset F$ follows arguments by induction analogous to the proof of \Cref{mun}.
\end{proof}

From now on, we no longer require that $\mu_m \subset k$. We first assume that $m=\ell$ is a prime number. Consider the field extension $ F[\rho]$ obtained by attaching a primitive $\ell$-th root of unity $\rho$. We denote by $G$ the Galois group of $F[\rho]$ over $F$. To descend \Cref{maintheoremprime} to this setting, we first need to construct an appropriate model $\mathfrak{X}$ for a fixed cohomology class $ \xi \in H^{n+2}\left(F, \mu_\ell\right).$
Consider the restriction $\xi_{F[\rho]} \in H^{n+2}\left( F[\rho], \mu_\ell\right)$. Then $\xi_{F[\rho]}$ may be written as the sum of symbols 
$$\xi_{F[\rho]} = \sum_{i=1}^r \left( a_{i,1}, \ldots, a_{i,n+2}\right) $$
for some $a_{i,j} \in F[\rho]$. 

\begin{lemma} 
With the notation as above, there is a regular proper model $\X$ over $\O_K$ of $F$ such that the following conditions are satisfied: 
\begin{enumerate}
    \item The base change $\X[\rho]$ of $\X$ to $\O_{K[\rho]}$ is a regular proper model of $F[\rho]$;
    \item There are distinct irreducible curves $C_1, \ldots, C_r$ on $\X[\rho]$ so that 
            \begin{equation}\label{eq:coverXrho}\left( \X[\rho]\right)_{k[\rho]} \cup \left( \bigcup_{i,j} \mathrm{supp}\left(\mathrm{div}(a_{i,j})\right) \right)  \subseteq \bigcup_{\substack{1 \leq i \leq r\\ \sigma \in G }} C_i^\sigma,\end{equation}
            where $C_i^\sigma$ denotes the Galois conjugate of $C_i$; 
    \item For the projection $\pi: \X[\rho] \ra \X$, we have that $\pi(C_i) = D_i$ for $1 \leq i \leq r$ are distinct irreducible curves on $\X$; and  
    \item $\sum_{i,\sigma} C_i^\sigma$ on $\X[\rho]$ and $\sum_{i} D_i$ on $\X$ are divisors with normal crossings. 
\end{enumerate}
\end{lemma}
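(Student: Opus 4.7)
The plan is to construct $\X$ downstairs and let $\X[\rho]$ arise by \'etale base change. The essential input is that $\ell$ is coprime to the residue characteristic of $K$, so the cyclotomic extension $K[\rho]/K$ is unramified; hence $\O_{K[\rho]}/\O_K$ is finite \'etale Galois with group $G$, and base change along this map preserves regularity and pulls back normal crossings divisors to normal crossings divisors. This immediately reduces the problem to building $\X$ together with an appropriate normal crossings divisor $\bigcup_i D_i$ on it.

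Start from any regular proper model $\X_0$ of $F$ (existence by Lipman's theorem, as already cited in \Cref{semiglobal}) and form the \'etale base change $\X_0[\rho]:=\X_0\times_{\O_K}\O_{K[\rho]}$, a regular proper model of $F[\rho]$, with projection $\pi_0\colon\X_0[\rho]\ra\X_0$. Each $a_{i,j}\in F[\rho]^{\times}$ defines a principal divisor on $\X_0[\rho]$ whose support is a finite union of irreducible curves; since $\pi_0$ is finite, each such curve projects to an irreducible curve on $\X_0$. Let $D_1^{(0)},\dots,D_N^{(0)}$ be this finite collection of irreducible curves on $\X_0$ together with the irreducible components of the reduced special fiber $(\X_0)_k$. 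Apply Lipman's embedded resolution \cite{Lipman1975,Lipman1978} to the pair $\bigl(\X_0,\bigcup_i D_i^{(0)}\bigr)$ to obtain a birational projective morphism $\X\ra\X_0$ with $\X$ regular and proper over $\O_K$ and with the total transform of $\bigcup_i D_i^{(0)}$ a divisor with normal crossings on $\X$, whose irreducible components we enumerate as $D_1,\dots,D_r$.

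Finally, set $\X[\rho]:=\X\times_{\O_K}\O_{K[\rho]}$ and let $\pi\colon\X[\rho]\ra\X$ be the projection. Part (1) is immediate from the preliminary observation. For each $D_i$, the preimage $\pi^{-1}(D_i)$ is a union of finitely many irreducible curves on which $G$ acts transitively (standard Galois theory for finite \'etale covers); pick one representative $C_i$ from each orbit. Then the $\pi(C_i)=D_i$ are distinct irreducible curves on $\X$ by construction, giving (3); the equality $\bigcup_{i,\sigma}C_i^{\sigma}=\pi^{-1}\bigl(\bigcup_i D_i\bigr)$ together with preservation of normal crossings under \'etale pullback gives (4); and every curve in $\bigcup_{i,j}\Supp(\div(a_{i,j}))$ on $\X[\rho]$, as well as every irreducible component of $(\X[\rho])_{k[\rho]}=\pi^{-1}(\X_k)$, lies over some $D_i$ by construction, hence inside $\bigcup_{i,\sigma}C_i^{\sigma}$, giving (2). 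The only real subtlety is bookkeeping---that performing the resolution only on $\X$ downstairs is enough to achieve the normal crossings condition upstairs on $\X[\rho]$---and this is governed entirely by the facts that blow-ups commute with flat base change and that \'etale base change preserves both regularity and normal crossings.
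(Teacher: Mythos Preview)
Your proposal is correct and follows essentially the same approach as the paper: start with an arbitrary regular proper model downstairs, note that $\O_{K[\rho]}/\O_K$ is finite \'etale (since $\ell$ is coprime to the residue characteristic) so that base change preserves regularity and normal crossings, choose the relevant curves $D_i$ on the base, perform embedded resolution downstairs to make $\sum_i D_i$ have normal crossings, and then pull everything back along the \'etale projection---using that blow-ups commute with flat base change---to obtain the required normal crossings divisor upstairs. The paper organizes the steps slightly differently (first fixing the $D_i$ on a model $\X$ and then blowing up to a new model $\tilde{\X}$, rather than doing the resolution in one pass), but the content is identical.
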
 

\begin{proof}
Let $\X$ be a regular proper model of $F$ over $\O_K$ as discussed in \Cref{semiglobal}  and denote by $\X[\rho]$ its base change to $\O_{K[\rho]}$. We remark that the projection $ \pi: \X[\rho] \ra \X$
is a finite \'etale morphism. We may choose distinct irreducible curves $D_1, \ldots, D_r$ on $\X$ so that if $C_1, \ldots, C_r$ are distinct irreducible curves on $\X[\rho]$ with $\pi(C_i) = D_i$, then (b) is satisfied. \\
We remark that the model $\X$ does not necessarily satisfy (d).  
 Let $b: \tilde{\X} \ra \X$ be a blow up such that $b^*\left( \sum_{i=1}^r D_i \right)$ is a normal crossing divisor on $\tilde{\X}$. We denote by $\tilde{\X}[\rho]$ the base change, by $\tilde{\pi}: \tilde{\X}[\rho] \ra \tilde{\X}$ the corresponding \'etale projection, and by $b[\rho]: \tilde{\X}[\rho] \ra \X[\rho]$ the base change of $b$. Then the following diagram commutes 
 $$\xymatrix{
\tilde{\X}[\rho]\ar[d]_{\tilde{\pi}} \ar[r]^{b[\rho]} & \X[\rho] \ar[d]^\pi\\
\tilde{\X} \ar[r]^b & \X  
 }$$
and therefore 
$$ \tilde{\pi}^* b^* \left( \sum_{i=1}^r D_i \right) = \left( b[\rho]\right)^* \pi^* \left( \sum_{i=1}^r D_i \right) = \left( b[\rho]\right)^* \left( \sum_{\substack{1 \leq i \leq r \\ \sigma \in G}} C_i^\sigma \right). $$
Since $b^* \left( \sum_{i=1}^r D_i \right)$ is a normal crossing divisor on $\tilde{\X}$ and $\tilde{\pi}: \tilde{\X}[\rho] \ra \tilde{\X}$ is finite \'etale, the pullback $\left( b[\rho]\right)^* \left( \sum_{i, \sigma} C_i^\sigma \right)$ is a divisor with normal crossings on $\tilde{\X}[\rho]$. This shows that the model $\tilde{\X}$ satisfies the conditions (a)-(d) in the statement. 
\end{proof} 

We are now ready to split the cohomology class $\xi \in H^{n+2}\left( F, \mu_\ell\right)$ using the model constructed in the previous lemma. 

\begin{thm} \label{maintheoremprime}
 Let $F$ be a semiglobal field over a complete discrete-valued field $K$ with residue field $k$. Let $\ell$ be a prime number coprime to the characteristic of the residue field $k$, and suppose that $\cohd_\ell(k)=n$. Any element  $\xi$  in $H^{n+2}\left(F, \mu_\ell\right)$ is split in a field extension $L$ of $F$ of degree $\leq \ell$ so that $\ind(\xi)$ divides $\ell$.
\end{thm}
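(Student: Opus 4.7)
The plan is to reduce to the case $\mu_\ell\subset F$ by adjoining a primitive $\ell$-th root of unity $\rho$, rerun the argument of \Cref{commonslot} over $F[\rho]$ with a carefully chosen element $\tilde g\in F^\times$, and then descend back to $F$ via corestriction, mirroring \Cref{lemma:muellnotink}.

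Working with the model $\X$ constructed in the preceding lemma, Voevodsky's theorem over $F[\rho]$ (which now contains $\mu_\ell$) writes $\xi_{F[\rho]}$ as a finite sum of symbols whose ramification on $\X[\rho]$ is supported inside $\bigcup_{i,\sigma}C_i^\sigma$, where these curves project under the \'etale cover $\pi\colon\X[\rho]\to\X$ onto distinct irreducible curves $D_1,\ldots,D_r$ on $\X$ whose sum is a normal crossings divisor. The next step is to imitate the construction in \Cref{commonslot} one level down on $\X$: pick a finite set $\mathcal{P}$ of closed points of $\X$ containing the images of all nodes of $\bigcup_{i,\sigma}C_i^\sigma$ and at least one point on each component of $\X_k$, semilocalize an affine open of $\X$ around $\mathcal{P}$ to obtain a semilocal regular (hence factorial) ring $R\subset F$, let $\tilde\pi_i\in R$ generate the height-one prime corresponding to $D_i$, and set $\tilde g:=\prod_i\tilde\pi_i\in F^\times$ and $L:=F(\sqrt[\ell]{\tilde g})$, a field extension of degree at most $\ell$.

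Because $K[\rho]/K$ is unramified of degree dividing $\ell-1$, the base change $R[\rho]$ is \'etale over $R$, remains semilocal regular, and each $D_i$ decomposes in $R[\rho]$ into the Galois conjugates $C_i^\sigma$. Hence the image of $\tilde\pi_i$ in $R[\rho]$ is a local parameter at each $C_i^\sigma$, and $\tilde g$ plays exactly the role of the element ``$g$'' from the proof of \Cref{commonslot}. Rerunning that proof over $F[\rho]$ with $\tilde g$ --- each of Cases 1a, 1b, 2a, 2b, 2c applies verbatim, since only the parameter property of $g$ at each ramification curve was used --- shows that every symbol occurring in $\xi_{F[\rho]}$ becomes trivial over $F[\rho]_\omega(\sqrt[\ell]{\tilde g})$ for every discrete valuation $\omega$ on $F[\rho]$. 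Summing, then applying the local-global principle \Cref{HHK} over $L[\rho]=F[\rho](\sqrt[\ell]{\tilde g})$, yields $\xi_{L[\rho]}=0$.

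To conclude, descend from $L[\rho]$ to $L$ as in \Cref{lemma:muellnotink}: the extension $L[\rho]/L$ is a cyclotomic subextension of $F[\rho]/F$ and so has degree $d$ dividing $\ell-1$, which is coprime to $\ell$. The identity $\Cor\circ\res=d$ on $H^{n+2}(L,\mu_\ell)$ gives $d\xi_L=\Cor(\xi_{L[\rho]})=0$, and since $\ell\xi_L=0$ with $\gcd(d,\ell)=1$, we deduce $\xi_L=0$. Hence $L$ is a splitting field for $\xi$ and $\ind(\xi)$ divides $\ell$. The principal obstacle is not the computation itself but the upstream geometry: exhibiting $\tilde g\in F^\times$ whose \'etale pullback to $R[\rho]$ is a uniformizer at every $C_i^\sigma$, which is precisely the purpose of the preceding lemma's Galois-equivariant normal crossings model, and making sure the case analysis of \Cref{commonslot} transfers intact after the base change.
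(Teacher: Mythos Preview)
Your proposal is correct and follows the same overall strategy as the paper---base change to $F[\rho]$, rerun the case analysis of \Cref{commonslot}, and descend via $\Cor\circ\res$---but differs in how the element defining $L$ is produced. The paper chooses $g\in F[\rho]^\times$ cutting out $\sum_{i,\sigma}C_i^\sigma$ on $\X[\rho]$ and then sets $L=F(\sqrt[\ell]{N(g)})$, so that $\div_{\X[\rho]}(N(g))=(\ell-1)\sum_{i,\sigma}C_i^\sigma+\pi^*\pi_*E$; one must then observe that the exponent $\ell-1$ is coprime to $\ell$, so $N(g)$ still serves as a uniformizer modulo $\ell$-th powers at each $C_i^\sigma$, and check that the extra divisor $\pi^*\pi_*E$ avoids the nodes. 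You instead construct $\tilde g\in F^\times$ directly on $\X$ as a product of uniformizers $\tilde\pi_i$ for the $D_i$, and use \'etaleness of $\pi$ to see that $\tilde\pi_i$ remains a uniformizer at every $C_i^\sigma$ upstairs. This is a genuine simplification: it bypasses the norm computation and the coprimality argument, and the verification that the extraneous divisor misses the nodes is automatic (since nodes of $\sum C_i^\sigma$ lie over nodes of $\sum D_i$, conjugates of the same $C_i$ being disjoint by \'etaleness over the regular curve $D_i$). Both arguments rest on the same Galois-equivariant normal crossings model supplied by the preceding lemma and the same local case analysis; your route just exploits that lemma more fully by working with the downstairs divisors $D_i$ from the start.
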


\begin{proof}
In view of \Cref{global1}, we may assume that $F$ does not contain a primitive $\ell$-th root of unity. Let $\rho$ be a primitive $\ell$-th root of unity in $\bar{F}$ and remark that $\left[F[\rho]: F\right]=\ell -1$. 
We write  $\xi_{F[\rho]} = \sum_{1\leq i \leq s}\left(a_{i,1}, \ldots ,a_{i,n+2}\right),$ with $a_{i,j} \in F[\rho]$. Let $\X$ be a regular proper model chosen as in the previous Lemma, i.e. satisfying conditions (a)-(d). We denote $\mathcal{C} = \{ C_1, \ldots, C_r\}$ and $\mathcal{C}^\sigma = \left\{ C_i^\sigma : 1 \leq i \leq r, \sigma \in G\right\}$, where $G$ is the Galois group of $F[\rho]$ over $F$.

As in the proof of \Cref{global1}, we take $g \in F[\rho]$ so that 
\begin{equation} \label{eq:divg}
\div_{\X[\rho]}(g) =\sum_{i,\sigma}C_i^{\sigma} +E,\end{equation} where $E$ is a divisor on $\X[\rho]$ whose support does not contain any $C_i^{\sigma}$ and no component of $\Supp(E)$ passes through a nodal point of $\mathcal{C}^\sigma$.
Let $N: F[\rho] \rightarrow F$ denote the norm map and we recall that $\pi: \X[\rho] \ra \X$ denotes the projection.
Let us also denote $k(C)$ (resp. $k(D)$) the residue field at the codimension one point $C$ on the 2-dimensional scheme $\X[\rho]$ (resp. $\X$). 
Then
\begin{equation}\label{eq:divNg} \begin{aligned}\div_{\X[\rho]}\left(N(g)\right) &= \pi^*\pi_*\left( \div_{\X[\rho]}(g)\right)\\
&\overset{(\ref{eq:divg})}= \pi^*\pi_*\left(\sum_{i,\sigma}C_i^{\sigma}+E \right)\\
&= \pi^*\left(\sum_i f_ig_iD_i\right)+ \pi^*\pi_*E\\
&=(\ell -1)\sum_{i,\sigma} C_i^\sigma + \pi^*\pi_*E,\end{aligned}\end{equation}
where $f_i$ is the degree of the residue field extension $\left[k(C_i): k(D_i) \right]$ and $g_i$ is the number of conjugates $C_i^{\sigma}$ of $C_i$. For the last equality, we remark that $$f_ig_i= [F[\rho]: F]= (\ell -1).$$

We now prove that no component of $\pi^*\pi_*E$ passes through a nodal point of $\mathcal{C}^\sigma$. Each component of $\pi^*\pi_*E$ is of the form $E^\tau_r$, with $E_r$ a component of $\Supp(E)$ and $\tau \in G$. Suppose that $E^\tau_r$ passes through a nodal point $C_i^{\sigma_1} \cap C_j^{\sigma_2}$ of $\mathcal{C}^\sigma$. Then $E_r$ passes through $C_i^{\sigma_1\tau} \cap C_j^{\sigma_2 \tau}$ which is a nodal point of $\mathcal{C}^\sigma$ contradicting that no component of $E$ passes through a nodal point of $\mathcal{C}^\sigma$.\\

As in the proof of \Cref{commonslot}, let $\mathcal{P}$ be a set of closed points on $\X[\rho]$ that includes all nodal points of $\mathcal{C}$ and at least one point on each component. We denote the set $\mathcal{P}^\sigma = \left\{ P^\sigma : P \in \mathcal{P}, \sigma \in G\right\}$. Let $U$ be an open subscheme of $\X[\rho]$ that contains $\mathcal{P}^\sigma$ and denote its affine coordinate ring by $A$. In the semilocalization $R$ of $A$ that corresponds to $U - \mathcal{P}^\sigma$ each $C_i^\sigma$ defines defines a height one prime ideal whose generator we denote by $\pi_i^\sigma$. Without loss of generality, we may assume that $\sigma(\pi_i) = \pi_i^\sigma.$ We denote $L = F\left( \sqrt[\ell]{N(g)}\right)$ and we claim that $L[\rho]$ splits $\xi_{F[\rho]}.$ Let $\omega$ be a discrete valuation of $L[\rho]$ and denote its restriction to $F[\rho]$ by $\nu$. If $\nu$ is centered on a codimension one point $P$ on $\X[\rho]$ so that $P \not\in \mathcal{C}$, then the argument in the proof of \Cref{commonslot} Case 1a applies to show that $\xi_{F[\rho]_\nu}$ splits over $\xi_{L[\rho]_\omega}$. If $P$ is centered on $C_i^\sigma$ for some $i$ and some $\sigma$, then we remark that $N(g) = u \left(\pi_i^\sigma\right)^{f_i}$ for some unit $u \in \O_{\X[\rho],C_i^\sigma}$. Since $f_i$ is coprime to $\ell$, the $\ell$-torsion symbol 
$\left(\xi_2, \pi_i\right)$ for $\xi_2 \in H^{n+1}\left( \widehat{\O_{\X,C_i^\sigma}}, \mu_\ell\right)$ is split if and only if the symbol $\left(\xi_2, (\pi_i^\sigma)^{f_i}\right) = \left( \xi_2, \pi_i^\sigma\right)^{f_i}$ is split. Thus the argument in the proof of \Cref{commonslot} Case 1b implies that  $\xi_{F[\rho]_\nu}$ splits over $L[\rho]_\omega$. A similar reasoning shows the local splitting if $\nu$ is centered on a codimension-two point on $\X[\rho]$. We conclude that $L[\rho]$ splits $\xi_{F[\rho}$. It remains to show that $\xi$ is already split over $L$. \\

We consider the following commutative diagram 
$$
\xymatrix{ 
H^{n+2}\left(L, \mu_{\ell}\right) \ar@{^(->}[r]^{\res} & H^{n+2} \left(L[\rho], \mu_{\ell}\right) \\
H^{n+2}\left(F, \mu_{\ell}\right) \ar@{^(->}[r]^{\res} \ar[u]^\res& H^{n+2} \left(F[\rho], \mu_{\ell}\right) \ar[u]^\res
}.
$$
The horizontal restriction maps are injective since $[F[\rho]:F] = [L[\rho]:L] \ell-1$ is coprime to $\ell$. Since $\xi$ splits over $L[\rho]$, it also splits over the degree $\ell$ extension $L$ of $F$ by injectivity. 
\end{proof}

We note that we have therefore, in effect, proved the following theorem.

\begin{thm}\label{thm:maintheorem}
    Let $F$ be a semiglobal field over $K$ a discrete valued field and denote $k$ the residue field of the latter. Let $m$ be any positive integer. Suppose that the characteristic of $k$ is coprime to $m$ and $\cohd(k) =n$. Then any element $\xi \in H^{n+2}\left(F, \mu_m\right)$ has index bounded above by $m$. 
\end{thm}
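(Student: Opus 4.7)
The plan is to reduce to the prime case (\Cref{maintheoremprime}) by induction on the number of prime factors of $m$, in exactly the same spirit as the proof of \Cref{mun} reduces \Cref{lemma:muellnotink} to arbitrary $m$. The base case $m = \ell$ prime is precisely the content of \Cref{maintheoremprime}, which produces a splitting field of degree at most $\ell$. So it remains to treat composite $m$.

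For the inductive step, write $m = s\ell$ for some prime $\ell$ dividing $m$. Since $\ell$ is coprime to $\mathrm{char}(k)$, the short exact sequence of Galois modules
\begin{equation*}
\xymatrix{ 0 \ar[r] & \mu_\ell \ar[r] & \mu_m \ar[r]^-{\ell} & \mu_s \ar[r] & 0 }
\end{equation*}
is exact and induces a long exact sequence in Galois cohomology. Given $\xi \in H^{n+2}(F,\mu_m)$, let $\tilde{\xi} \in H^{n+2}(F,\mu_s)$ be its image under the connecting map. By the inductive hypothesis applied to the semiglobal field $F$ with coefficients $\mu_s$, there is a field extension $L_1/F$ with $[L_1:F]\leq s$ such that $\tilde{\xi}_{L_1}=0$. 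By exactness of the long exact sequence over $L_1$, the restriction $\xi_{L_1}$ is in the image of the map $H^{n+2}(L_1,\mu_\ell)\to H^{n+2}(L_1,\mu_m)$, so there exists $\eta \in H^{n+2}(L_1,\mu_\ell)$ mapping to $\xi_{L_1}$.

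Now I observe that $L_1$ is itself a semiglobal field, being a finite extension of $F$: it is the function field of a curve over a finite extension $K'$ of $K$, where $K'$ is still a complete discretely valued field whose residue field $k'$ is a finite extension of $k$. By \Cref{lemma:cd-finite-ext}, $\cohd(k') = \cohd(k) = n$, and $\ell$ remains coprime to $\mathrm{char}(k')=\mathrm{char}(k)$. Hence \Cref{maintheoremprime} applies to $\eta \in H^{n+2}(L_1,\mu_\ell)$ and produces an extension $L/L_1$ of degree at most $\ell$ with $\eta_L=0$, and consequently $\xi_L = 0$. The resulting tower satisfies
\begin{equation*}
[L:F] = [L:L_1]\,[L_1:F] \leq \ell\, s = m,
\end{equation*}
showing that $\mathrm{ind}(\xi) \leq m$.

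There is no real obstacle here since all the heavy lifting has been done in \Cref{maintheoremprime}; the only point to verify carefully is that the intermediate field $L_1$ remains a semiglobal field with a residue field of cohomological dimension $n$, so the inductive hypothesis and \Cref{maintheoremprime} can both legitimately be applied along the tower. This is handled by \Cref{lemma:cd-finite-ext} together with the observation that finite extensions of semiglobal fields are semiglobal over a finite extension of the base complete discretely valued field.
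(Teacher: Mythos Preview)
Your proof is correct and follows exactly the route indicated in the paper, which simply says ``Same argumentation as in \Cref{mun}, using \Cref{maintheoremprime} for the inductive assumption.'' You have filled in the details the paper leaves implicit, including the verification that the intermediate field $L_1$ is again semiglobal with residue field of the correct cohomological dimension; the only minor quibble is that the map $H^{n+2}(F,\mu_m)\to H^{n+2}(F,\mu_s)$ is induced by the surjection $\mu_m\to\mu_s$, not a connecting map.
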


\begin{proof}
Same argumentation as in \Cref{mun}, using \Cref{maintheoremprime} for the inductive assumption. 
\end{proof}

\section{Open questions}\label{sec:open}

In this section, we discuss related results and open questions. We first propose a conjecture which extends our results on bounded index for the top cohomology for semiglobal fields. Secondly, we highlight the relationship between symbol length and the $u$-invariant for quadratic forms. 

\subsection{Questions on top cohomology} \label{sec:questions-top-cohom}

Let $F$ be any field and fix a prime number $\ell$ that is coprime to the characteristic of $F$. We assume that $\cohd_\ell(F)=n$, so that $H^{n+r}(F, \mu_\ell) =0$ for any $r>0$. We call $H^n(F, \mu_\ell)$ the top cohomology of $F$. \\
 We remark that in this generality, there is no uniform bound for indices of elements in top cohomology $H^n(F, \mu_\ell)$. More precisely, in \cite{Merkurjev1991}, Merkurjev produces examples of fields $F$ with $\cohd(F) =2$ which admit division algebras that are tensor products of $n$ quaternion algebras for any given $n$. These algebras have index $2^n$ in $H^2(F, \mu_2)$. We are interested in uniform bounds for indices in top cohomology. As a first step in this program, we highlight the following conjecture. 

\begin{conjecture} \label{conj}
    Let $K$ be a field with $\cohd_\ell(K) = n$ and fix a prime $\ell$ coprime to the characteristic of $K$. Let $F$ be a function field of a curve over $K$. If the indices in $H^n (K, \mu_\ell)$ are bounded, then the indices in $H^{n+1}(F, \mu_\ell)$ are bounded. 
\end{conjecture}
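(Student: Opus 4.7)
The plan is to follow the strategy of \Cref{thm:maintheorem}, replacing the complete discretely valued base by an arbitrary field $K$ of $\ell$-cohomological dimension $n$. First I would reduce to the case $\mu_\ell \subset F$ by adjoining a primitive $\ell$-th root of unity; since the resulting extension has degree dividing $\ell-1$, the corestriction–restriction argument of \Cref{lemma:muellnotink} shows that an index bound is preserved under this reduction. Having done so, any $\xi \in H^{n+1}(F, \mu_\ell)$ can be written as a sum of symbols by Voevodsky's theorem, and the task becomes bounding $\ind(\xi)$ in terms of the hypothesized uniform bound $B$ on indices in $H^n(K, \mu_\ell)$.

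Next I would fix a smooth projective model $C$ of $F$ over $K$ and analyze $\xi$ locally. At each closed point $P$ of $C$, the local ring is a DVR with uniformizer $\pi_P$ and residue field $k(P)$, a finite extension of $K$ satisfying $\cohd_\ell(k(P))=n$. Applying \Cref{lemma:decompose_xi_over_K} at the completion $F_P$, one writes $\xi_{F_P} = \xi_1 + (\xi_2, \pi_P)$ with $\xi_1,\xi_2$ unramified. Since $\O_{C,P}$ is Henselian, $\xi_1$ corresponds to a class in $H^{n+1}(k(P), \mu_\ell)=0$, so $\xi_1=0$; the local obstruction is then captured by $\xi_2 \in H^n(k(P), \mu_\ell)$. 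Since $\xi$ is ramified only at a finite set of points, at all but finitely many $P$ we already have $\xi_{F_P}=0$.

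To bound $\ind(\xi)$, I would seek a finite extension $L/F$ of degree controlled by $B$ that, at every place $\omega$ of $L$ over a ramified $P$, both splits the local class $\xi_2$ and absorbs $(\,\cdot\,,\pi_P)$ by adjoining $\sqrt[\ell]{\pi_P}$. The first step would be to promote the hypothesis on $K$ to the finite extensions $k(P)$: using the corestriction–restriction formalism, one can bound indices in $H^n(k(P),\mu_\ell)$ in terms of $B$ and $[k(P):K]$, furnishing a local splitting extension $k(P)'/k(P)$ of controlled degree at each ramified point. I would then construct $L$ by globally realizing these local data, for instance by choosing a global function whose divisor captures the ramification locus analogously to the element $g = \prod_i \pi_i$ in the proof of \Cref{commonslot}, combined with global elements providing the $\xi_2$-splittings.

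The principal obstacle lies in the assembly step: producing a single global extension of bounded degree from local splittings typically requires a local–global principle. In the semiglobal setting of the paper, this role is played by \Cref{HHK}, which crucially exploits that $K$ is complete discretely valued. For an arbitrary $K$ with $\cohd_\ell(K)=n$, no such patching theorem is available a priori, and I expect the proof to demand either an analogous local–global principle in this generality—perhaps derived from a Gersten/Bloch–Ogus resolution of the smooth curve $C$ that identifies $H^{n+1}(F,\mu_\ell)$, up to unramified classes over $K$, with $\bigoplus_{P} H^n(k(P),\mu_\ell)$—or a purely cohomological argument that uses the full force of the index bound over $K$ to control residues without patching. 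Establishing this local–global bridge in the absence of completeness is the main difficulty.
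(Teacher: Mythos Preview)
The statement you are trying to prove is \Cref{conj}, and in the paper it is stated as an \emph{open conjecture}, not a theorem. There is no proof in the paper to compare your proposal against; the authors merely remark that the conjecture holds when $K$ is complete discretely valued (this is precisely the content of \Cref{thm:maintheorem}) and, citing \cite{Sur20}, when $K$ is a global field of characteristic different from $\ell$.

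Your outline in fact locates the exact reason the statement remains open. The local analysis at closed points $P$ of $C$ works as you describe, reducing $\xi_{F_P}$ to $(\xi_2,\pi_P)$ with $\xi_2 \in H^n(k(P),\mu_\ell)$. The difficulty is the assembly step: producing a single global extension of bounded degree from the local data requires a local--global principle, and the only one available in the paper, \Cref{HHK}, genuinely depends on $K$ being complete discretely valued (via patching). You correctly flag this as ``the principal obstacle,'' and it is; no Gersten-type or Bloch--Ogus replacement for patching is currently known to close this gap for arbitrary $K$. There is also a secondary issue you gloss over: the corestriction--restriction argument you invoke would at best bound $\ind(\xi_2)$ in terms of $B$ \emph{and} $[k(P):K]$, but the latter degree is unbounded as $P$ ranges over the closed points of $C$, so the hypothesis on $H^n(K,\mu_\ell)$ alone does not obviously yield a uniform index bound over the residue fields $k(P)$. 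Both obstructions are part of why \Cref{conj} is listed among the open questions rather than among the theorems.
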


We prove the conjecture if $F$ is a semiglobal field. The conjecture also has a positive answer if $K$ is a global field of characteristic different from $\ell$ \cite{Sur20}. 

We recall that the diophantine dimension $\dd(F)$ of a field $F$ is the smallest integer $r \geq 0$ such that any $n$-form of degree $d\geq 1$ has a nontrivial zero in $F$, whenever $n>d^r$. If no such $r$ exists, we say that $\dd(F)=\infty$. The following conjecture was posed by Krashen in \cite[Conjecture 1, p. 997]{krashen2016}.

\begin{conjecture}
 Suppose that $\dd(F) \leq d$. Then for elements $\xi \in H^d\left( F, \mu_\ell^{\otimes d}\right)$, the index $\ind(\xi)$ divides $\ell$. 
\end{conjecture}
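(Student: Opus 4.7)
Plan: My approach is to reduce the conjecture to a common slot lemma for sums of symbols, and then to extract that lemma from the diophantine dimension hypothesis via splitting varieties. As a first step I would replace $F$ by its cyclotomic extension $F(\rho)$, where $\rho$ is a primitive $\ell$-th root of unity. Since $[F(\rho):F]=\ell-1$ is coprime to $\ell$, the restriction--corestriction argument used in \Cref{lemma:muellnotink} reduces the problem to splitting $\xi_{F(\rho)}$ over a degree $\ell$ extension of $F(\rho)$. Because $F(\rho)/F$ is finite algebraic, $\dd(F(\rho))\leq \dd(F)\leq d$ is preserved, so I may assume $\mu_\ell\subset F$.

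By Voevodsky's norm residue isomorphism I would then write $\xi = \sum_{i=1}^{r}(a_{i,1},\ldots,a_{i,d})$ as a sum of $r$ symbols. Each individual symbol is split by the Kummer extension $F(\sqrt[\ell]{a_{i,1}})$ of degree $\ell$, by \Cref{rmk:splitting-field}, so the remaining task is a common slot lemma: produce a single degree $\ell$ extension $L/F$ that splits all $r$ symbols simultaneously. Producing such an $L$ would immediately give $\ind(\xi)\mid\ell$.

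The hypothesis $\dd(F)\leq d$ should enter through splitting varieties of degree $\ell$ and dimension roughly $\ell^d$. For $d=2$ and a symbol $(a,b)\in H^2(F,\mu_\ell^{\otimes 2})$, the reduced norm of the cyclic algebra $(a,b)_\ell$ is a form of degree $\ell$ in $\ell^2$ variables whose isotropy is equivalent to splitting. For higher $d$ one would want analogues coming from Rost or Merkurjev--Suslin splitting varieties, or from norm hypersurfaces of iterated cyclic algebras. The idea is to bundle the $r$ symbols of $\xi$ into a single form of degree $\ell$ in strictly more than $\ell^d$ variables --- for instance via an appropriate Weil restriction or fiber product of splitting varieties parametrized by candidate degree $\ell$ Kummer extensions --- so that $\dd(F)\leq d$ forces a rational point, which would encode a common slot. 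In the case $\ell=2$ this strategy is most tractable: $\dd(F)\leq d$ gives $u(F)\leq 2^d$, whence $I^{d+1}(F)=0$ by the Arason--Pfister Hauptsatz, and Milnor's conjecture identifies $H^d(F,\mu_2^{\otimes d})$ with $I^d(F)$; Elman--Lam style linkage for Pfister forms under the bound $u(F)\leq 2^d$ strongly suggests that any class in $I^d(F)$ is a single $d$-fold Pfister form, yielding the desired quadratic splitting.

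The main obstacle --- and the reason the conjecture remains open --- is precisely this last step of combining many degree $\ell$ splitting varieties into one whose isotropy encodes a genuine common slot, as opposed to merely a zero of some sum of norm forms. For odd $\ell$ there is no Pfister theory available, the Rost varieties do not admit obvious products with the right diophantine profile, and constructing a universal parameter variety for degree $\ell$ Kummer splittings of controlled dimension appears to require genuinely new input beyond the patching and local-global techniques used in the semiglobal case of the present paper. A more modest intermediate goal, which I would pursue first, is to establish the conjecture in the two cases where the symbol length of $H^d(F,\mu_\ell^{\otimes d})$ is already known to be bounded by a function of $\dd(F)$, and then try to reverse-engineer the common slot from such a symbol length bound.
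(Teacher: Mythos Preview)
The statement you are addressing is recorded in the paper as an \emph{open conjecture} of Krashen; the paper does not prove it and offers no argument to compare against. The paper's contribution is only an analogue: the main theorem replaces the hypothesis $\dd(F)\leq d$ by the hypothesis that $F$ is semiglobal with $\cohd(k)\leq d-2$, and the patching and local--global machinery used there has no evident counterpart under a bare diophantine-dimension hypothesis. So your proposal should be read as a research plan toward an open problem, which you yourself acknowledge.

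Your preliminary reductions are fine. Passing to $F(\rho)$ preserves the diophantine dimension (the $C_i$ property is stable under algebraic extensions by Lang--Nagata), and the restriction--corestriction argument with $[F(\rho):F]$ coprime to $\ell$ is the standard one. Writing $\xi$ as a sum of symbols via the norm residue isomorphism and reframing the problem as a common slot lemma is also the natural move.

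Your $\ell=2$ sketch is essentially a complete and correct argument, and this case is indeed known. From $\dd(F)\leq d$ one gets $u(F)\leq 2^d$; by the Arason--Pfister Hauptsatz any nonzero anisotropic form in $I^d(F)$ then has dimension exactly $2^d$, and a $2^d$-dimensional form in $I^d$ is similar to a $d$-fold Pfister form. Hence every class in $H^d(F,\mu_2^{\otimes d})$ is a single symbol and is split by a quadratic extension.

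For odd $\ell$ the gap you name is genuine and is precisely why the conjecture is open. The step where you propose to ``bundle the $r$ symbols into a single form of degree $\ell$ in strictly more than $\ell^d$ variables'' does not go through with current technology: fiber products or Weil restrictions of splitting varieties do not produce a hypersurface of degree $\ell$ (they raise either the degree or the codimension), and for $d>2$ the Rost and norm varieties attached to symbols are not cut out by a single degree-$\ell$ form whose isotropy detects splitting. There is also no known mechanism that converts a bound on $\dd(F)$ into a common-slot statement for collections of degree-$d$ symbols when $\ell$ is odd. Your fallback plan --- first bound symbol length as a function of $\dd(F)$ and then extract a common slot --- runs into the same wall, since no such symbol-length bound is known for odd $\ell$ and $d\geq 3$ either. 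In short, the proposal correctly isolates the difficulty but does not supply the missing idea.
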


Our main result may be viewed as an analogue of this conjecture with the assumption $\dd(F)\leq d$ replaced by $\cohd(k) \leq d-2$, which implies $\cohd(F) \leq d$, for a semiglobal field $F$.

\subsection{Quadratic forms and $u$-invariant} \label{quad}

It is well-known that classical invariants of quadratic forms take values in the Galois cohomology groups. In 1970, Milnor proposed higher invariants for quadratic forms which could determine the isomorphism class of a quadratic form up to hyperbolic planes. 

Let $W(K)$ be the Witt ring of a field $K$ and denote by $I(K)$ the fundamental ideal of $W(K)$ of even dimensional forms. The ideal $I^n(K)$ is generate by $n$-fold Pfister forms
$$\left< \left< a_1, \ldots, a_n \right> \right> = \left< 1, -a_1\right> \otimes \left< 1, -a_2\right> \otimes \cdots \otimes \left< 1, -a_n\right>.$$
For every $n>0$, Milnor defined a map
$$I^n(K) \rightarrow H^n\left( K, \mu_2\right)$$
that takes the Pfister form $\left< \left< a_1, \ldots, a_n \right> \right>$ to the symbol $(a_1, \ldots, a_n)$. Milnor conjectured that these maps are well-defined and onto with kernel precisely $I^{n+1}(K)$. Milnor's conjecture was proved by Voevodsky in \cite{Voevodsky2003}. 

Another invariant of fields associated with quadratic forms is the $u$-invariant which is defined as the maximum dimension of an anisotropic quadratic form over $K$. In the case that $K$ is a complete discretely valued field with residue field $k$, Springer shows in \cite{Springer1955} that $u(K) = 2u(k)$. Here, we focus on the case of a semiglobal field $F$, which is the function field of a curve $C$ over $K$. Suppose that $\hbox{char}(k) \neq 2$. If $u(l) \leq d$ for any finite extension $l$ of the residue field $k$ and $u(k(C)) \leq 2d $, where $C$ is any curve over $k$, then $u(F) \leq 4d$ by \cite{HHK1}. The case that $K$ is a $p$-adic field was proved in \cite{Parimala-Suresh2010}. On the other hand, if $k$ is perfect with $\mathrm{char}(k) = 2$, then Parimala and Suresh show in \cite{parimalasuresh} that if $F$ is the function field of a curve over a complete discretely valued field $K$ of characteristic zero, then $u(F) \leq 8$. Further, they conjecture that with the exact same hypothesis, but that of $k$ being perfect, $u(F) \leq 8[k: k^2]$. Equivalently, the question is whether every quadratic form in at least nine variables over the function field of a curve over a local field has a nontrivial zero.

We now recall two main results in the area. The first of Krashen, \cite[4.2]{krashen2016}, relates the bounds on indices 
of Galois cohomology classes for any field to bounds on the symbol length in Galois cohomology. The second result relates the bound on symbol length in $\mu_2$-cohomology  to bounds on the $u$-invariant. Recall the notation from \Cref{Voevodsky} on the symbol length of an element in $H^n(F, \mu_m)$ as $\hbox{symb}_n^m(F)$. 

\begin{thm}[{\cite{krashen2016}}]\label{thm:krashen2016}
Let $K$ be a field, $\ell$ a prime not equal to the characteristic of $K$ and $n \geq 1$. Suppose that there exists an integer $N$ such that for every finite extension $L$ of $K$ and for all $\beta \in H^d(L, \mu_{\ell}^{\otimes d}), ~~ 1 \leq d \leq n, \ind(\beta) \leq N$. Then, for any $\alpha \in H^n(K, \mu_{\ell}^{\otimes n})$, the symbol length of $\alpha$ is bounded in terms of $\ind(\alpha)$, $N$ and $n$.
\end{thm}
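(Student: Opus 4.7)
The plan is a double induction: outer on the cohomological degree $n$ and inner on $\ind(\alpha)$. The outer base case $n=1$ is immediate, since every class in $H^1(K,\mu_\ell)\cong K^\times/(K^\times)^\ell$ is a single symbol. Before the inductive step I would reduce to the case $\mu_\ell\subset K$: the extension $K(\mu_\ell)/K$ has degree dividing $\ell-1$, hence coprime to $\ell$, so restriction is injective on the $\ell$-torsion class $\alpha$, and via the projection formula together with a Rost-style expansion of $\Cor$ of a symbol, a bounded sum of symbols over $K(\mu_\ell)$ descends to a bounded sum of symbols over $K$. I therefore assume $\mu_\ell\subset K$ throughout, paying only a multiplicative factor depending on $n$ and $\ell$.

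For the inductive step, fix $\alpha\in H^n(K,\mu_\ell^{\otimes n})$ with $\ind(\alpha)=m$. Since $\alpha$ has period $\ell$, its index shares the same prime factors, so $m=\ell^r$ for some $r\geq 0$; if $r=0$ then $\alpha=0$ and the symbol length is $0$. Assume $r\geq 1$ and take a splitting field $L/K$ of degree $m$. Replacing $K$ with the $\ell$-Sylow fixed subfield of the Galois closure $\widetilde{L}/K$ (again a prime-to-$\ell$ base change, harmless for $\ell$-torsion classes by restriction--corestriction), I may assume $L/K$ is Galois with an $\ell$-group Galois group. Any nontrivial $\ell$-group admits a central subgroup of order $\ell$, so one obtains an intermediate cyclic extension $K\subset K_1\subset L$ of degree $\ell$, and by Kummer theory $K_1=K(\sqrt[\ell]{a})$ for some $a\in K^\times$. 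Since $L$ still splits $\alpha_{K_1}$ and $[L:K_1]=m/\ell$, we have $\ind_{K_1}(\alpha_{K_1})\leq m/\ell$, and the inner inductive hypothesis (applied at the field $K_1$, which still satisfies the standing hypothesis on indices) yields a symbol-length bound $C(n,m/\ell,N)$ for $\alpha_{K_1}$ over $K_1$.

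The remaining step is to lift this bound back to $K$. By the Merkurjev--Suslin theorem, the kernel of the restriction map $\res\colon H^n(K,\mu_\ell^{\otimes n})\to H^n(K_1,\mu_\ell^{\otimes n})$ is the image of $(a)\cup-\colon H^{n-1}(K,\mu_\ell^{\otimes(n-1)})\to H^n(K,\mu_\ell^{\otimes n})$. A concrete lifting procedure, in which each slot $b_{i,j}\in K_1^\times$ of the symbol decomposition of $\alpha_{K_1}$ is expanded in a $K$-basis of $K_1$ and the Steinberg and multilinearity relations \eqref{eq:relations-for-symbols} are applied iteratively, produces an element $\alpha_0\in H^n(K,\mu_\ell^{\otimes n})$ of symbol length at most $f(n,\ell)\cdot C(n,m/\ell,N)$ whose restriction to $K_1$ equals $\alpha_{K_1}$. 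Then $\alpha-\alpha_0=(a)\cup\gamma$ for some $\gamma\in H^{n-1}(K,\mu_\ell^{\otimes(n-1)})$. The hypothesis of the theorem applies at degree $n-1$, so the outer inductive hypothesis bounds the symbol length of $\gamma$ (and hence of $(a)\cup\gamma$) by some $B(n-1,N,N)$. Combining yields a bound for $\alpha$ depending only on $n$, $m$, and $N$.

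The hard part will be the symbol-length-controlled lifting. Although Merkurjev--Suslin identifies the kernel of restriction set-theoretically, turning a short sum of symbols over $K_1$ into a short sum of symbols over $K$ modulo $(a)\cup H^{n-1}$ requires delicate manipulation with the Steinberg relations, or alternatively a careful use of corestriction together with the projection formula, to guarantee that the blow-up factor $f(n,\ell)$ is finite and depends only on $n$ and $\ell$. Controlling this factor across the $\log_\ell m$ layers of the inner induction is precisely what makes the final bound depend only on $n$, $m$, and $N$.
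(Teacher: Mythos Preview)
The paper does not prove this theorem; it merely quotes it from \cite{krashen2016} (this is Theorem~4.2 there). So there is no in-paper argument to compare against. Nonetheless, your sketch has a real gap that is worth naming.

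The decisive step is your ``symbol-length-controlled lifting'': given a presentation of $\alpha_{K_1}$ as a sum of $C$ symbols over $K_1=K(\sqrt[\ell]{a})$, you assert the existence of $\alpha_0\in H^n(K,\mu_\ell^{\otimes n})$ with $\res(\alpha_0)=\alpha_{K_1}$ and symbol length at most $f(n,\ell)\,C$. The mechanism you propose---``expand each slot $b_{i,j}\in K_1^\times$ in a $K$-basis of $K_1$ and apply multilinearity''---does not make sense: a $K$-basis expansion is additive, not multiplicative, so it produces no factorization of $b_{i,j}$ into elements of $K^\times$, and the Steinberg relations give you nothing to work with. The alternative you gesture at, corestriction plus the projection formula, runs into the same wall: $\Cor_{K_1/K}$ of a symbol over $K_1$ is not, in any a priori bounded way, a sum of symbols over $K$ (indeed, controlling corestriction of symbols is essentially equivalent to what you are trying to prove). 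You correctly flag this as ``the hard part,'' but you do not actually do it; as written, the argument is circular at this point. Krashen's proof avoids this lifting-with-bounds problem by organizing the induction differently, using generic splitting varieties so that at each stage one lands directly in the kernel of restriction rather than needing to realize an arbitrary short sum over $K_1$ as the restriction of a short sum over $K$.

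Two smaller issues. First, the identification of $\ker\bigl(\res\colon H^n(K)\to H^n(K_1)\bigr)$ with $(a)\cup H^{n-1}(K)$ is not Merkurjev--Suslin (that is the case $n=2$); for general $n$ it is a consequence of the full norm-residue isomorphism (Voevodsky/Rost), via Hilbert~90 for Milnor $K$-theory. Second, you ``take a splitting field $L/K$ of degree $m=\ind(\alpha)$,'' but for $n>2$ it is not known that the index is realized by a single splitting field; you only know it is the $\gcd$ of such degrees. This can be patched (pass to an $\ell$-Sylow in the Galois closure of some splitting field and keep track of the exponent), but it deserves a sentence.
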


The following theorem is a consequence of a theorem of Orlov, Vishik, and Voevodsky \cite{orlov2007v} on the Milnor conjecture (see also \cite{PS2015}).

\begin{thm}[\cite{orlov2007v}] \label{mu2u}
Let $K$ be a field of characteristic not equal to 2. Suppose that there exist integers $M \geq 1$ and $N$ such that $H^M(K, \mu_2) = 0$ and $\hbox{symb}_d^2(K)(\alpha) \leq N$ for all $\alpha \in H^d(K, \mu_2), ~~ 1\leq d < M$. Then the $u$-invariant is bounded by a function of $M$ and $N$.
\end{thm}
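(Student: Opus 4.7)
The plan is to translate the cohomological hypotheses into statements about the Witt ring $W(K)$ via Milnor's conjecture (proved by Voevodsky \cite{Voevodsky2003}), which for every $d \geq 1$ gives an isomorphism
$$e_d \colon I^d(K)/I^{d+1}(K) \xrightarrow{\;\sim\;} H^d(K, \mu_2)$$
sending the class of the $d$-fold Pfister form $\langle\langle a_1, \ldots, a_d \rangle\rangle$ to the symbol $(a_1, \ldots, a_d)$. Under this identification, the assumption $\operatorname{symb}_d^2(K) \leq N$ for $1 \leq d < M$ says exactly that every class in $I^d(K)/I^{d+1}(K)$ is represented by a sum of at most $N$ $d$-fold Pfister forms, each of dimension $2^d$.

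I would next establish that $I^M(K) = 0$ in $W(K)$. Since Milnor $K$-theory mod $2$ is generated as a graded $\mathbb{F}_2$-algebra by degree-one elements, the vanishing $H^M(K, \mu_2) = 0$ propagates via Voevodsky's isomorphism to $H^d(K, \mu_2) = 0$ for every $d \geq M$, hence $I^d(K) = I^{d+1}(K)$ for all $d \geq M$. By the Arason--Pfister Hauptsatz, any nonzero anisotropic form in $I^n(K)$ has dimension at least $2^n$; applying this with $n$ arbitrarily large forces $I^M(K) = 0$. Given an anisotropic form $\phi$ over $K$, I would then descend along the filtration $W(K) \supset I(K) \supset \cdots \supset I^{M-1}(K) \supset I^M(K) = 0$: at each level $d < M$, the symbol length bound produces a form $\psi_d$ of dimension at most $2^d N$, a sum of $\leq N$ $d$-fold Pfister forms, such that the current residue of $\phi$ is Witt-equivalent to $\psi_d$ modulo $I^{d+1}(K)$. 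Iterating, $\phi$ is Witt-equivalent to $\psi_0 \perp \psi_1 \perp \cdots \perp \psi_{M-1}$, where $\psi_0$ accounts for at most one extra dimension coming from the dimension-mod-$2$ invariant. Since the anisotropic part of a Witt class has dimension no greater than any representative,
$$\dim \phi \;\leq\; 1 + \sum_{d=1}^{M-1} 2^d N \;=\; 1 + N(2^M - 2),$$
and so $u(K) \leq 1 + N(2^M - 2)$.

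The chief obstacle is establishing the vanishing $I^M(K) = 0$, which in turn rests on two deep inputs: Voevodsky's proof of Milnor's conjecture (the content of what Orlov--Vishik--Voevodsky \cite{orlov2007v} really establish, by computing the motives of Pfister quadrics), and the Arason--Pfister Hauptsatz together with the propagation argument above. Once these are in place, the descent step reduces to elementary bookkeeping, but one must still carefully track hyperbolic cancellations when passing from the Witt-equivalence $\phi \sim \psi_0 \perp \cdots \perp \psi_{M-1}$ to the claimed dimension inequality for the anisotropic $\phi$.
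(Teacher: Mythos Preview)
The paper does not supply its own proof of this theorem; it is stated as a known consequence of the Milnor conjectures, citing \cite{orlov2007v} and \cite{PS2015}. Your argument is the standard one and is correct: translate the symbol-length hypothesis through the Orlov--Vishik--Voevodsky and Voevodsky isomorphisms $I^d(K)/I^{d+1}(K)\cong H^d(K,\mu_2)$, use the vanishing of $H^M(K,\mu_2)$ together with multiplicativity of Milnor $K$-theory mod $2$ and the Arason--Pfister Hauptsatz to conclude $I^M(K)=0$, and then descend the $I$-adic filtration to bound the dimension of any anisotropic form by $1 + N(2^M-2)$. This is exactly the derivation alluded to in the paper's citations, so there is nothing to contrast.
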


If we could show that the index is bounded for elements in $H^i (F, \mu_2)$ for all $i \leq n+2$, then using \Cref{thm:krashen2016,mu2u}, it would follow that the $u$-invariant of $F$ is bounded. 
In this paper, letting $\cohd(K)= n+1$ for a discrete valued field $K$, we have shown that in the top cohomology (i.e degree $n+2)$ and in the context of a semiglobal fields $F$ over $K$, the index is necessarily bounded. If we were to show that the index is bounded for all $i$-cohomology, $i \leq \cohd(K)$, then we would obtain a bound depending on this index for each $\alpha \in  H^{n+1}(F, \mu_{\ell}^{\otimes n+1})$. Then applying \Cref{mu2u}, with $M=n+2$, we would obtain a bound on the $u$-invariant as a function of three variables: the integer $n+2$, some homogeneous bound on the indices for all finite extensions of $F$ (in all degrees) and the index of the element in $H^{n+1}(F, \mu_{\ell}^{\otimes n+1})$. 

Finally, we remark that the converse also holds: The finiteness of the $u$-invariant for $\mu_2$-cohomology implies the boundedness of the index in all degrees, as proven in \cite{Saltman}. Further, if the u-invariant of $F$ is bounded by $2^n$ where $\cohd_2(F)=n$, then by \cite[Proposition 5.2(ii)]{krashen2016}, the index of elements in $H^n(F, \mu_2)$ is at most $2$.

\bibliography{bibl}
\bibliographystyle{alpha}

\end{document}